\def\titlerunning#1{\gdef\titrun{#1}}
\def\author#1{\gdef\autrun{\def\and{\unskip, }#1}\gdef\@author{#1}}
\def\address#1{{\def\and{\\\hspace*{18pt}}\renewcommand{\thefootnote}{}%
		\footnote {#1}}%
	\markboth{\autrun}{\titrun}}
\def\email#1{e-mail: #1}
\def\subjclass#1{{\renewcommand{\thefootnote}{}%
		\footnote{\emph{Mathematics Subject Classification (2010):} #1}}}
\def\keywords#1{\par\medskip
	\noindent\textbf{Keywords.} #1}
\newtheorem{theorem}{Theorem}[section]
\newtheorem{lemma}{Lemma}[section]
\newtheorem{definition}{Definition}[section]
\newtheorem{proposition}{Proposition}[section]
\newtheorem{remark}{Remark}[section]
\newtheorem{corollary}{Corollary}[section]
\newcommand{\R}{\mathbb{R}}
\newcommand{\T}{\mathbb{T}}
\newcommand{\Q}{\mathbb{Q}}
\numberwithin{equation}{section}
\newcommand{\PreserveBackslash}[1]{\let\temp=\\#1\let\\=\temp}
\newcolumntype{C}[1]{>{\PreserveBackslash\centering}p{#1}}
\newcolumntype{R}[1]{>{\PreserveBackslash\raggedleft}p{#1}}
\newcolumntype{L}[1]{>{\PreserveBackslash\raggedright}p{#1}}
\newcolumntype{I}{!{\vrule width 1pt}}
\newlength\savedwidth
\def\subjclass#1{{\renewcommand{\thefootnote}{}%
		\footnote{\emph{Mathematics Subject Classification (2020):} #1}}}
\begin{document}
	
	%%%%% To ease editing, add:
	
	\baselineskip=15pt
	
	%%%%%%%%%%%%%%%%
	
	%% In the running head, give an abbreviation of the title.
	\titlerunning{  }
	
	\title{Time periodic and almost periodic viscosity solutions of contact Hamilton-Jacobi equations on $\mathbb{T}^n$}
	
	\author{Kaizhi Wang \and Jun Yan \and Kai Zhao}

	\date{\today}
	
	\maketitle

	\address{Kaizhi Wang: School of Mathematical Sciences, CMA-Shanghai, Shanghai Jiao Tong University, Shanghai 200240, China; \email{kzwang@sjtu.edu.cn}
		\and Jun Yan: School of Mathematical Sciences, Fudan University, Shanghai 200433, China;
		\email{yanjun@fudan.edu.cn}
		\and
		Kai Zhao:  School of Mathematical Sciences, Tongji University, Shanghai 200092, China;
		\email{zhaokai93@tongji.edu.cn}}

	\begin{abstract}
 This paper concerns with the time periodic viscosity solution problem for a class of evolutionary contact Hamilton-Jacobi equations with time independent Hamiltonians on the torus $\mathbb{T}^n$. Under certain suitable assumptions we 
 show that the equation  has a non-trivial $T$-periodic viscosity solution if and only if $T\in D$, where $D$ is a dense subset of $[0,+\infty)$. Moreover, we clarify the structure of $D$. As a consequence, we also study the existence of Bohr almost periodic viscosity solutions. 
	\subjclass{ 37J50; 35F21; 35D40	}
		\keywords{Periodic solutions; almost periodic solutions; Hamilton-Jacobi equations}
		
	\end{abstract}

\tableofcontents

%\section{Introduction and main results}
%Let $\mathbb{T}^2=[0,1]\times [0,1]$, 
%$$
%\partial_t u + \frac{1}{2} (\partial_{x_1} u)^2 +\frac{1}{2} (\partial_{x_2} u)^2 - \omega_1 \partial_{x_1} u -\omega_2 \partial_{x_2} u-\lambda u=0, \quad (x_1,x_2)\in \mathbb{T}^2
%$$
%Then $u_0 \equiv 0$ is a solution and  subsoltuion
%\begin{align*}
%w(x_1,x_2,t)=&\, \epsilon \Big[1+\sin ( x_1-\frac{\pi}{2} +\omega_1 t) +1+   \sin ( x_2-\frac{\pi}{2}+\omega_2 t) \Big] \\
%=&\, \epsilon \Big[2+2 \sin ( \frac{x_1+x_2}{2} -\frac{\pi}{2} + \frac{\omega_1+\omega_2}{2} t)  \cos(\frac{x_2-x_1}{2} + \frac{\omega_2-\omega_1}{2} t ) \Big]
%\end{align*}
%with $w(0,0,0)=0$.
%
%Notice that
%$$
%\inf_{x_2\in \mathbb{T}^1} w(x_1,x_2,t)=\epsilon \Big[1+\sin ( x_1-\frac{\pi}{2} +\omega_1 t)\Big]
%$$
%is $\frac{2\pi}{\omega_1}$-periodic and
%$$
%\inf_{x_1\in \mathbb{T}^1} w(x_1,x_2,t)=\epsilon \Big[1+\sin ( x_2-\frac{\pi}{2} +\omega_2 t)\Big]
%$$
%is $\frac{2\pi}{\omega_2}$-periodic.
% 
% Let 
% $$
% \mathbb{S}^1_k:= \{(x_1,x_2): x_2=kx_1 \}
%  $$
%\begin{align*}
%	&\, \inf_{(x_1,x_2)\in \mathbb{S}^1_k} w(x_1,x_2,t)\\ 
%	=&\, \inf_{x_1\in \mathbb{T}^1} \epsilon \Big[2+2 \sin ( \frac{x_1(k+1)}{2} -\frac{\pi}{2} + \frac{\omega_1+\omega_2}{2} t)  \cos(\frac{(k-1)x_1}{2} + \frac{\omega_2-\omega_1}{2} t ) \Big]
%\end{align*}

\section{Introduction and main results} 
\subsection{The motivation of this paper}
This paper concerns with the existence of non-trivial time periodic and almost periodic viscosity solutions of evolutionary contact  Hamilton-Jacobi equations with time independent Hamiltonians on the flat $n$-torus $\T^n$. The novelty here is that our research object is a class of Hamilton-Jacobi equations with {\it autonomous contact Hamiltonians on a manifold of high dimension}. In order to describe our result clearly, we first recall some known work closely related to ours.

\begin{itemize}
		\item Classical Hamilton-Jacobi equations (where Hamiltonians are defined on the cotangent bundle $T^*\mathbb{T}^n$ of $\mathbb{T}^n$):
	\begin{itemize}
	\item  If $F(x,p)$ is a $C^2$  Hamiltonian defined on $T^*\mathbb{T}^n$ satisfying Tonelli conditions with respect to the argument $p$, then as $t\to +\infty$ the viscosity solution of 
	\begin{align}\label{00}
		\partial_t u(x,t) +  F(x,\partial_x u(x,t))=0, \quad (x,t)\in \mathbb{T}^n \times [0,+\infty)
	\end{align}
	converges to $\pm \infty$, or  a viscosity solution of 
	\[
	F(x,\partial_xu(x))=0.
	\] 
	 This means equation \eqref{00} has no non-trivial time periodic  viscosity solutions. See \cite{Fat-b,R} for details.
	\end{itemize}
	\item Contact Hamilton-Jacobi equations (where Hamiltonians are defined on the extended cotangent bundle $T^*\mathbb{T}^n\times\mathbb{R}$  equipped with the canonical contact form):
	\begin{itemize}
	\item Increasing case:  If $H(x,p,u)$ is a $C^3$ Hamiltonian defined on $T^*\mathbb{T}^n\times \mathbb{R}$
	and satisfies Tonelli conditions  with respect to the argument $p$  and $0\leq\frac{\partial H}{\partial u}\leq \lambda$ for some $\lambda>0$, then
	\[
	\partial_t w(x,t) +  H(x,\partial_x w(x,t), w(x,t))=0, \quad (x,t)\in \mathbb{T}^n \times [0,+\infty)
	\]
	does not admit non-trivial time periodic viscosity solutions for a similar reason as \eqref{00}. See \cite{S} for details.
	\item  Decreasing case:  If the Hamiltonian $H(x,p,u)$ is strictly decreasing in the argument $u$, the situation will be very different. Assume there are constants $\kappa>0$, $\delta>0$ such that $$
	-\kappa\leqslant\frac{\partial H}{\partial u}(x, p,u) \leqslant -\delta <0 ,\quad \forall (x,p,u)\in \mathbb{S}\times\R\times\R.
	$$ In \cite{WYZ23}, the authors proved that 
	\begin{align}\label{11}
	\partial_t w(x,t) + H( x,\partial_x w(x,t),w(x,t) )=0,\quad  (x,t)\in \mathbb{S} \times [0,+\infty)
	\end{align}
	has infinitely many time periodic viscosity solutions with different periods under certain additional  assumption. The structure of the unit circle $\mathbb{S}$ played an essential role there.

%	It is clear that $u_0\equiv c$ is a classical solution of $H(\partial_x u,u)=0$, where  The contact Hamiltonian flow $\Phi^H_t$ on the Aubry set $\mathcal{A}:=\{ (x,p,u);x\in \T^n,p=0,u=c \} $ is 
%	$$
%	\dot x= \frac{\partial H }{\partial p}(0,\cdots,0,c)=\omega
%	$$		
%	which is the linear flow on the torus or completely integrable systems.
	
	\end{itemize}
\end{itemize}

{\it Are there non-trivial time periodic viscosity solutions of contact Hamilton-Jacobi equations in high-dimensional case?} This is the motivation of the present paper. In \cite{WYZ23} we used the fact that the Aubry set consists of a periodic orbit of the corresponding contact Hamiltonian system to construct the time periodic viscosity solution of \eqref{11}. However, this is not the case in the setting of this paper. Hence, we need new ideas here. In addition, we did not study the existence of almost periodic viscosity solutions in \cite{WYZ23}. As far as we know, there is little known about  time almost periodic viscosity solutions of Hamilton-Jacobi equations.  Ishii \cite{I2000} studied the almost periodic homogenization of Hamilton-Jacobi equations, where the Hamiltonian $H(x,y,p)$ is almost periodic in the argument $y$. Panov \cite{P} studied the long time behaviour of viscosity solutions of the Cauchy problem for a class of first-order Hamilton-Jacobi equations with spatially almost periodic initial functions.

\subsection{Assumptions and main results}
From now on we will use the symbol $H$ to denote a $C^3$  contact    Hamiltonian
\begin{align*}
	H: \mathbb{R}^n\times\mathbb{R}&\to \mathbb{R}\\
	(p,u)&\mapsto H(p,u).
\end{align*}
In this paper we study the time periodic viscosity solution problem for contact Hamilton-Jacobi equations
\begin{equation}\label{eq:HJe}\tag{E}
	\partial_t w(x,t) +  H(\partial_x w(x,t), w(x,t))=0, \quad (x,t)\in \mathbb{T}^n \times [0,+\infty),
\end{equation}
under the following standing assumptions on the  $H$:
	\begin{itemize}
	\item [\bf(H1)] $\frac{\partial^2 H}{\partial p^2} (p,u)>0$ for each $(p,u)\in \R^n \times\R$;
	\item  [\bf(H2)] for each $u \in  \R$, $H(p,u)$ is superlinear in $p$;
	\item [\bf(H3)] there are constants $\kappa>0$, $\delta >0 $  such that
	$$
	-\kappa\leqslant\frac{\partial H}{\partial u}( p,u) \leqslant -\delta <0 ,\quad \forall (p,u)\in  \R^n\times\R.
	$$	
\end{itemize}

%Here $\mathbb{T}^n$  is the n-dimensional torus
%$$
%\mathbb{T}^n=\underbrace{\mathbb{S}^1 \times \cdots \times \mathbb{S}^1}_{n \  \text{times}}= \R^n/\mathbb{Z}^n=\underbrace{ \R/\mathbb{Z} \times \cdots \times \R/\mathbb{Z}}_{n \  \text{times}}
%$$
%A natural fundamental domain for $\R^n/\mathbb{Z}^n$ is the unit cube:
%$$
%I^n= \{(x_1,\cdots,x_n)\in \R^n \, |\, 0\leqslant x_i < 1 \ \text{for} \ i=1,\cdots, n \}.
%$$
%Here $w=w(x,t)$ is a function on $\mathbb{T}^n \times [0,+\infty)$ and $H$ is a given $C^3$ function on $ \R^n \times \mathbb{R}$.
%	
%	We assume throughout the following:
%	\begin{itemize}
%		\item [\bf(H1)] $\frac{\partial^2 H}{\partial p^2} (p,u)>0$ for each $(p,u)\in \R^n \times\R$;
%		\item  [\bf(H2)] for each $u \in  \R$, $H(p,u)$ is superlinear in $p$;
%		\item [\bf(H3)] there are constants $\kappa>0$, $\delta >0 $  such that
%		$$
%		-\kappa\leqslant\frac{\partial H}{\partial u}( p,u) \leqslant -\delta <0 ,\quad \forall (p,u)\in  \R^n\times\R.
%		$$	
%	\end{itemize}	
\begin{remark}\label{1}
	Assumptions (H1) and (H2) are the standard Tonelli conditions. (H3) implies that $H$ is strictly decreasing in the argument $u$ and there exists a unique constant $c\in \R $ such that 
	$$
	H(0,c)=0.
	$$ 
\end{remark}

%We will also use the following assumption throughout the paper:
%\medskip
%
%	\begin{itemize}
%		\item  [\bf(A)]  $\frac{\partial H}{\partial p}(0,c) \neq 0$, where $c$ is as in Remark \ref{1}.
%	\end{itemize}
%\medskip

Denote by $\omega=(\omega_1,\cdots,\omega_n):= \frac{\partial H}{\partial p}(0,c)$
 and define 
$$
D:=\{|A|^{-1} : A=k_1\omega_1+ \cdots +k_n \omega_n \neq 0 , (k_1,\cdots, k_n)\in \mathbb{Q}^n \}.
$$
\begin{remark}\label{re2}
	It is easy to check that 
	\begin{align*}
		D=\Big \{T\in \R^+:   
		 (k_1\omega_1  + \cdots +k_n \omega_n) T=k_{n+1}\neq 0 \ \text{ for some }\ (k_1,\cdots,k_{n+1})\in \mathbb{Z}^{n+1}\Big\}.
	\end{align*}
It is clear that  $D$ is dense on $[0,+\infty)$ when $\omega\neq 0 $; $D= \emptyset$ when $\omega =0 $. 
\end{remark}

% ( It is clear that $a k_0 \in D$   for any $a\in D, k_0\in \Q$
%
%{\color{blue}
%For any $a\in D,k_0\in \Q\backslash{\{0\} } $, there exists $(q_1,q_2,\cdots q_n)\in \mathbb{Q}^n$ such that $ a^{-1}=q_1\omega_1+\cdots q_n \omega_n\neq 0$. Then $(k_0a)^{-1}= k_0^{-1} q_1\omega_1+\cdots  k_0^{-1}q_n \omega_n \in D$ which implies that $ k_0a\in D$.
%}
% 
%)

The first main result of this paper is stated as follows.
\begin{theorem}\label{thm:1} 
Equation  \eqref{eq:HJe}  
	 has a non-trivial $T$-periodic viscosity solution if and only if $T\in D$.
\end{theorem}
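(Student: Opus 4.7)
My plan is a traveling-wave ansatz that reduces \eqref{eq:HJe} to a one-dimensional stationary problem. Given $T \in D$, pick $k \in \mathbb{Z}^n$ and $m \in \mathbb{Z}\setminus\{0\}$ with $(k\cdot\omega)T = m$, and seek
\[
w(x,t) = c + \phi\bigl(k\cdot x - (k\cdot\omega)\,t\bigr),
\]
with $\phi:\mathbb{R}\to\mathbb{R}$ continuous and $1$-periodic. Then $w$ descends to $\mathbb{T}^n$ (as $k\in\mathbb{Z}^n$) and is $T$-periodic (as $(k\cdot\omega)T\in\mathbb{Z}$), so the only genuine check is that $w$ solves \eqref{eq:HJe} in the viscosity sense. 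Substitution shows this is equivalent to producing a non-trivial $1$-periodic viscosity solution $\phi$ of the one-dimensional stationary contact HJ equation
\[
\tilde H\bigl(\phi'(s),\, c + \phi(s)\bigr) = 0, \qquad s \in \mathbb{S},
\]
where $\tilde H(p,u) := H(kp, u) - (k\cdot\omega)p$ still satisfies analogues of (H1)--(H3), and $\tilde H(0,c) = 0$. Analyzing $\{\tilde H = 0\}$: for $u \geq c$ it consists of two branches $p_\pm(u)$ meeting tangentially at $(0,c)$, while for $u < c$ near $c$ one has $\tilde H(\cdot,u) > 0$. I assemble $\phi$ by starting at the tangential minimum $\phi = 0$, ascending along $\phi' = p_+(c+\phi)$ to a prescribed peak $\phi^* > 0$, switching through a concave corner to $\phi' = p_-(c+\phi)$, and descending tangentially back to $0$. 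The sub-solution inequality at the concave maximum holds because $\tilde H(\cdot, c+\phi^*)$ is convex with roots $p_\pm(c+\phi^*)$, hence $\leq 0$ on the super-differential interval; the smooth join at $\phi = 0$ works because $p_\pm(c) = 0$. The fundamental period
\[
L(\phi^*) = \int_c^{c+\phi^*}\Bigl[\tfrac{1}{p_+(u)} + \tfrac{1}{|p_-(u)|}\Bigr]\,du
\]
is continuous in $\phi^*$ and ranges over a connected interval starting at $0$, so an intermediate-value argument yields $\phi^*$ with $L(\phi^*) = 1/N$ for any sufficiently large $N$. The resulting $\phi$ is $1$-periodic and non-trivial, proving sufficiency.

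\textbf{Necessity ($T \notin D$ implies only the trivial solution).} Let $w$ be a $T$-periodic viscosity solution. By (H3) and the standard comparison principle, $u \equiv c$ is the unique viscosity solution of the stationary equation $H(\nabla u, u) = 0$ on $\mathbb{T}^n$, so it suffices to exclude genuinely time-dependent $T$-periodic solutions. Lifting $w$ to $\mathbb{R}^n$ and defining $v(y,t) := w(y + \omega t, t)$, one obtains a $\mathbb{Z}^n$-periodic-in-$y$ function satisfying $\partial_t v - \omega\cdot\nabla v + H(\nabla v, v) = 0$ together with the twisted periodicity
\[
v(y, t+T) = v(y + \omega T, t), \qquad \text{so } v(y, t+nT) = v(y + n\omega T, t)\ \text{ for all } n\in\mathbb{Z}.
\]
The condition $T \notin D$ is precisely the statement that every character $k \in \mathbb{Z}^n$ with $(k\cdot\omega)T \in \mathbb{Z}$ already satisfies $k\cdot\omega = 0$: the time-shift by $T$ contributes no new resonance beyond the ``trivial'' ones orthogonal to $\omega$. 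I would combine this arithmetic rigidity with a weak KAM / contact Aubry-Mather description of \eqref{eq:HJe}: any non-trivial $T$-periodic viscosity solution must carry an invariant set for the contact characteristic flow whose projected rotation vector is rationally commensurate with $\omega$ and whose closing time divides $T$. When $T \notin D$ no such commensurability is available, the invariant set collapses into $\{u \equiv c\}$, and uniqueness of the stationary solution then forces $v \equiv 0$, i.e., $w \equiv c$.

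\textbf{Main obstacle.} The sufficiency is a clean construction; its only delicate points are the viscosity inequalities at the concave maxima and tangential minima of $\phi$ together with the IVT on $L(\phi^*)$. The real difficulty is the necessity: it is a classification/rigidity statement about \emph{all} $T$-periodic viscosity solutions, not merely those of traveling-wave shape, and requires genuine weak KAM / contact Aubry-Mather machinery to turn the resonance failure ``$T \notin D$'' into the collapse of the underlying invariant structure. Leveraging the arithmetic of $D$ to obstruct every possible non-trivial invariant structure above the level $\{u = c\}$ --- rather than just the one exhibited by the ansatz --- is where the proof will demand the most care.
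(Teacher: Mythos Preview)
Your sufficiency argument via the traveling-wave ansatz $w(x,t) = c + \phi(k\cdot x - (k\cdot\omega)t)$ is a genuine alternative to the paper's construction, and it works. The paper instead builds the periodic solution by weak KAM means: it fixes a closed set $S \subset \mathbb{T}^n$ with $\Phi_T^H(S) = S$ and $\Phi_t^H(S) \cap S = \emptyset$ for $t \in (0,T)$, sets $U_k(x,t) = \inf_{y\in S} h_{y,c}(x, kT+t)$, shows this is decreasing in $k$ and bounded below by an explicit smooth subsolution (Lemma~\ref{lem:subsolution-w}), and passes to the limit. Your route is more elementary and yields exactly the explicit solutions in the paper's final example; the paper's route, on the other hand, produces solutions whose minimum set is precisely $\Phi_t^H(S)$, which is the engine behind the fundamental-period result (Theorem~\ref{thm:2}) and the almost-periodic corollary.

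There are two genuine problems on the necessity side. First, the claim that ``by (H3) and the standard comparison principle, $u\equiv c$ is the unique viscosity solution of the stationary equation'' is false: (H3) is the \emph{decreasing} condition $\partial_u H < 0$, which is exactly the regime in which the standard comparison principle breaks down, and the stationary equation typically has infinitely many viscosity solutions (e.g.\ for $H(p,u)=\tfrac12|p|^2 - u$ on $\mathbb{T}^1$, any periodic gluing of arcs of $\phi'=\pm\sqrt{2\phi}$ is one). This does not actually break your reduction --- ``non-trivial'' in the paper means time-dependent, so ruling out time-dependent $T$-periodic solutions \emph{is} the whole task --- but your justification for that reduction is wrong.

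Second, and more seriously, the necessity sketch never becomes a proof, and the mechanism the paper uses is rather different from what you outline. Given a $T$-periodic solution $w$, Proposition~\ref{prop:WWY-longtime-JDE} forces $\min_x w(x,0) = c$; Lemma~\ref{lem:lem-2} then gives $w(\Phi_{kT}^H(x_0),0) = c$ for every $k$ whenever $w(x_0,0)=c$; and the key arithmetic input is that when $T\notin D$ the discrete orbit $\{\Phi_{kT}^H(x_0)\}_k$ is dense in the continuous orbit $\{\Phi_t^H(x_0)\}_t$ (Proposition~\ref{Appendix:3}). This makes the zero set $I_w = \{w(\cdot,0)=c\}$ invariant under the full flow $\Phi_t^H$, after which a separate long-time lemma (Lemma~\ref{lem:long time}) shows $T_t^- w(\cdot,0)$ actually converges as $t\to\infty$, contradicting time-dependence. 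Your ``invariant set with rotation vector commensurate to $\omega$ and closing time dividing $T$'' is pointing vaguely in this direction, but the actual obstruction is not about rotation vectors of invariant measures above the Aubry set; it is the much more concrete fact that the level set $\{w=c\}$ is forced to be flow-invariant, and that is what your proposal is missing.
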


	In view of Theorem \ref{thm:1},  one can deduce that  equation \eqref{eq:HJe} has infinitely many non-trivial time periodic viscosity solutions whose periods differ from each other. From Remark \ref{re2} we know that the set of the periods is dense in $[0,+\infty)$.
Since any continuous non-constant periodic function has a fundamental period, a further discussion on the fundamental period problem is necessary.

Define
$$
\mathcal{D} :=\{|A|^{-1} : A=k_1\omega_1+ \cdots +k_n \omega_n \neq 0 , (k_1,\cdots, k_n)\in \mathbb{Z}^n   \}.
$$
Then $\mathcal{D}$ is dense on $[0,+\infty)$   if and only if there exist $i$, $j \in \{1,2,\cdots,n \}$  such that $\omega_i/\omega_j \notin \Q $.   %{\color{blue} 
%\begin{itemize}
%	\item[(1)] If   for any $ i, j\in \{1,2,\cdots,n \}$  there holds  $ \omega_i/\omega_j \in \Q $, then there exists $\omega_0$ such that $\mathcal{D} =\{|A|^{-1} : A=k_1 \omega_0 \neq 0 ,   k_1 \in \mathbb{Z}  \}$. Then $\mathcal{D}$ is not dense on $[0,+\infty)$.
%	\item[(2)] If there exists $ i, j\in \{1,2,\cdots,n \}$ such that $ \omega_i/\omega_j \notin \Q $, then the set $\mathcal{D}_0:= \{|A|^{-1} : A=k_i \omega_i+ k_j \omega_j \neq 0 ,   (k_i,k_j) \in \mathbb{Z}^2  \}$ is dense on $[0,+\infty)$. Notice that $\mathcal{D}_0 \subset \mathcal{D}$, we obtain that $\mathcal{D} $ is  dense on $[0,+\infty)$.
%\end{itemize}
%}

The second main result is the following.
\begin{theorem}\label{thm:2}
For any $T\in \mathcal{D}$, equation \eqref{eq:HJe} admits a non-trivial time-periodic viscosity solution with the fundamental period $T$. 
\end{theorem}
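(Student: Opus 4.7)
The plan is to reduce Theorem \ref{thm:2} to a one-dimensional contact Hamilton--Jacobi problem along an integer direction, and then lift the resulting 1D solution back to $\mathbb{T}^n$. Fix $T\in\mathcal{D}$ and write $T=1/|A|$ with $A=k\cdot\omega=\sum_{i=1}^n k_i\omega_i$ for some $k=(k_1,\dots,k_n)\in\mathbb{Z}^n\setminus\{0\}$; after replacing $k$ by $-k$ if needed we may assume $A>0$. Because $k\in\mathbb{Z}^n$, the assignment
\[\pi_k:\mathbb{T}^n\to\mathbb{S},\qquad \pi_k(x):=k\cdot x\bmod 1,\]
is a well-defined continuous surjective group homomorphism, and the goal is to produce $w$ in the factored form $w(x,t):=V(\pi_k(x),t)$ for a suitable $V:\mathbb{S}\times[0,+\infty)\to\mathbb{R}$.

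Introduce the auxiliary 1D contact Hamiltonian $\bar H(q,u):=H(kq,u)$. Since $k\neq 0$ one checks directly that $\bar H$ inherits (H1)--(H3) from $H$; moreover $\bar H(0,c)=H(0,c)=0$, so the 1D distinguished constant is again $c$, and the induced 1D frequency is $\bar\omega:=\partial_q\bar H(0,c)=k\cdot\omega=A>0$, so that $1/|\bar\omega|=T$. If $V$ is any viscosity solution of
\[\partial_\tau V(y,\tau)+\bar H(\partial_y V(y,\tau),V(y,\tau))=0\qquad\text{on}\ \mathbb{S}\times[0,+\infty),\]
then the chain-rule identities $\partial_t w=\partial_\tau V$ and $\partial_x w=k\,\partial_y V$, interpreted through test functions of the form $\phi(x,t)=\psi(\pi_k(x),t)$, show that $w(x,t)=V(\pi_k(x),t)$ is a viscosity solution of \eqref{eq:HJe}. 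Furthermore, surjectivity of $\pi_k$ forces the fundamental period of $w$ in $t$ to equal that of $V$ in $\tau$, so it suffices to exhibit a non-trivial $T$-periodic viscosity solution $V$ of the reduced equation whose fundamental period is exactly $T$.

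The reduced 1D Hamiltonian $\bar H$ is spatially homogeneous with distinguished constant $c$, and the projected contact Hamiltonian flow through $\{q=0,u=c\}$ on $\mathbb{S}$ is the closed orbit $y(\tau)=y_0+A\tau$ of minimal period $T$. This is precisely the setting treated in \cite{WYZ23}, where a non-trivial $T$-periodic viscosity solution $V$ is produced by a weak KAM / Lax--Oleinik construction attached to this closed orbit. Because the generating orbit has minimal period $T$, the resulting $V$ cannot have a shorter period: any smaller period $T/m$ with $m\geq 2$ would force the underlying invariant orbit on the Aubry set to close up in time $T/m$, contradicting $\bar\omega=A$.

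The main difficulty lies entirely in the one-dimensional base case, namely certifying that the constructed 1D viscosity solution has fundamental period exactly $T$ rather than some proper divisor $T/m$. This amounts to a careful analysis of the Lax--Oleinik orbit in $C(\mathbb{S},\mathbb{R})$ attached to the periodic Aubry orbit for $\bar H$, using the strict monotonicity $\partial_u H\leq-\delta<0$ in (H3) to rule out degenerate behaviour. Once the 1D solution with fundamental period $T$ is in hand, the lifting via $\pi_k$ and the matching of fundamental periods are straightforward.
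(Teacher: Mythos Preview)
Your reduction to one dimension via the integer projection $\pi_k:\mathbb{T}^n\to\mathbb{S}$, $\pi_k(x)=k\cdot x\bmod 1$, is a genuinely different route from the paper. The paper stays on $\mathbb{T}^n$: it takes the $T$-periodic solution $u(x,t)$ already built in the proof of Theorem~\ref{thm:1} from the hyperplane $S=\Pi(\{x:k\cdot x\in\mathbb{Z}\})$ and proves that $\{x:u(x,t)=c\}=\Phi_t^H(S)$ for every $t$, using the subsolution barrier $W^\epsilon$ of Lemma~\ref{lem:subsolution-w} for one inclusion and Lemma~\ref{lem:lem-2} for the other; since $\Phi_t^H(S)\cap S=\emptyset$ for $t\in(0,T)$ by \eqref{eq:eq:pf-thm1-step1-1}, the fundamental period is $T$. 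Your approach collapses $S=\pi_k^{-1}(\{0\})$ to a single point on $\mathbb{S}$ and works there; this buys simpler geometry but does not bypass the analytic core.

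Two points need repair. First, your justification of the lifting (``interpreted through test functions of the form $\psi(\pi_k(x),t)$'') is backwards: to verify the viscosity inequalities for $w=V\circ\pi_k$ you must test against an \emph{arbitrary} smooth $\phi$. The correct observation is that $w$ is constant on each fibre $\{k\cdot x=\mathrm{const}\}$, so at any touching point the tangential part of $\partial_x\phi$ vanishes, forcing $\partial_x\phi=\lambda k$ for some $\lambda\in\mathbb{R}$; then $H(\partial_x\phi,w)=\bar H(\lambda,V)$ and a one-variable test function $\psi$ can be manufactured from $\phi$ along the line $x_0+\mathbb{R}k/|k|^2$. Second, and more seriously, the one-dimensional fundamental-period claim is the entire content of the theorem and you have not proved it. The introduction attributes to \cite{WYZ23} only the existence of non-trivial periodic solutions on $\mathbb{S}$, not that the stated period is fundamental, and your heuristic (``would force the Aubry orbit to close in time $T/m$'') is a restatement of what must be shown rather than a proof. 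What is actually needed is the identity $\{y\in\mathbb{S}:V(y,t)=c\}=\{At\bmod 1\}$ for the specific $V$ you construct: one inclusion follows from Lemma~\ref{lem:lem-2}, but the other requires a barrier subsolution of the type in Lemma~\ref{lem:subsolution-w} to exclude additional minimum points. Once that level-set identity holds, $\{At\bmod 1\}\neq\{0\}$ for $t\in(0,T)$ finishes the argument --- but this is precisely the paper's proof in the case $n=1$, so your reduction has relocated rather than eliminated the main step.
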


 As a direct consequence of Theorem \ref{thm:2}, by using the properties of almost periodic functions and viscosity solutions we get the following result.
\begin{corollary}\label{Cor:1}Equation \eqref{eq:HJe} admits infinitely many non-trivial Bohr almost periodic viscosity solutions if and only if there exist $i$, $j \in \{1,2,\cdots,n \}$  such that $\omega_i/\omega_j \notin \Q $, or equivalently, $\mathcal{D}$ is dense on $[0,+\infty)$.
\end{corollary}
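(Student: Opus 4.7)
The plan is to derive Corollary \ref{Cor:1} from Theorem \ref{thm:2} by invoking the elementary fact that every continuous periodic function is Bohr almost periodic, and to handle the converse via a spectral rigidity argument.

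First I would treat the sufficient direction. Assuming the equivalent density of $\mathcal{D}$ in $[0,+\infty)$, pick a sequence $\{T_k\}_{k\geq 1}\subset\mathcal{D}$ of pairwise distinct positive numbers. For each $k$, Theorem \ref{thm:2} supplies a non-trivial time-periodic viscosity solution $w_k$ of \eqref{eq:HJe} whose fundamental period is exactly $T_k$. Each such $w_k$ is continuous and periodic in $t$ (uniformly in $x\in\mathbb{T}^n$), hence Bohr almost periodic in $t$; and pairwise distinct fundamental periods force the $w_k$ to be pairwise distinct viscosity solutions. This already yields infinitely many non-trivial Bohr almost periodic viscosity solutions.

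For the converse I would reason by contraposition. Assume $\omega_i/\omega_j\in\mathbb{Q}$ for every $i,j$. Then either $\omega=0$, in which case Remark \ref{re2} gives $D=\emptyset$ and Theorem \ref{thm:1} excludes non-trivial time-periodic viscosity solutions altogether, or $\omega\neq 0$ is commensurate with a single generator $\omega_0$, so that $\mathcal{D}=\{1/(mc):m\in\mathbb{Z}_{>0}\}$ for some $c>0$ is a countable non-dense set accumulating only at $0$. The key ingredient is a rigidity claim: the Fourier--Bohr spectrum of any non-trivial Bohr almost periodic viscosity solution of \eqref{eq:HJe} must be contained in the $\mathbb{Z}$-module $\sum_i\omega_i\mathbb{Z}$, which is cyclic in the commensurate case, forcing any such solution to be periodic with fundamental period in $\mathcal{D}$ and leaving only finitely many distinct solutions once time translations are quotiented out.

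The main obstacle is the rigidity statement in the reverse direction, namely that no Bohr almost periodic viscosity solution can carry Fourier--Bohr frequencies outside $\sum_i\omega_i\mathbb{Z}$. I would attack it by combining the Bochner compactness characterisation of Bohr almost periodicity, applied to the orbit of a candidate solution under time translation, with the long-time convergence of the contact Lax--Oleinik semigroup used in the proof of Theorem \ref{thm:2}, whose asymptotic dynamics is governed by the rotation vector $\omega$ on the Aubry set. The subtle point is that viscosity solutions are not linearly decomposable, so the Fourier--Bohr spectrum must be read off from translation-invariant functionals of the solution rather than from a literal plane-wave expansion.
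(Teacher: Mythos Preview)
The gap is in your reading of ``non-trivial Bohr almost periodic''. In this paper the phrase means almost periodic in $t$ but \emph{not} periodic (periodic solutions being the trivial almost periodic ones), not merely non-constant. With your reading, the forward implication does not use the hypothesis at all: Theorem~\ref{thm:2} already produces infinitely many non-constant periodic solutions whenever $\mathcal{D}\neq\emptyset$, i.e.\ whenever $\omega\neq 0$, regardless of whether any ratio $\omega_i/\omega_j$ is irrational; and those same periodic solutions would then also falsify your converse. The paper's actual construction is different: given $\omega_i/\omega_j\notin\mathbb{Q}$, one picks periods $\mathcal{T}_1,\mathcal{T}_k\in\mathcal{D}$ with $\mathcal{T}_1/\mathcal{T}_k\notin\mathbb{Q}$, takes the corresponding periodic viscosity solutions $w_1,w_k$ from Theorem~\ref{thm:2}, and forms $\min\{w_1,w_k\}$. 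This is again a viscosity solution (Lemma~\ref{lem:Tt-+inf}), is almost periodic as a minimum of almost periodic functions, and is shown to be \emph{not} periodic because its level set $\{x:w(x,0)=c\}=S_{\mathcal{T}_1}\cup S_{\mathcal{T}_k}$ cannot be $\Phi^H_\mathcal{T}$-invariant for any $\mathcal{T}$ unless both $\mathcal{T}/\mathcal{T}_1$ and $\mathcal{T}/\mathcal{T}_k$ are rational (Lemma~\ref{lem:almost-periodic}, Step~3).

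For the converse, the correct target is not ``finitely many solutions up to time translation'' but rather: when every ratio $\omega_i/\omega_j$ is rational, every bounded almost periodic viscosity solution is in fact periodic, so there are \emph{none} of the non-trivial kind. The paper does not argue via Fourier--Bohr spectra. It observes that in the commensurate case the linear flow $\Phi^H_t$ itself has a period $\mathcal{T}$, so that $I_\varphi=\{\varphi=c\}$ is automatically $\Phi^H_\mathcal{T}$-invariant for every admissible initial datum, and then reruns the convergence machinery of Lemma~\ref{lem:long time} along the subsequence $n\mathcal{T}$ to obtain $T^-_{n\mathcal{T}+t}\varphi\to U_1(\cdot,t)$ with $U_1$ $\mathcal{T}$-periodic. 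For $w$ Bohr almost periodic this convergence, combined with recurrence, forces $w$ to coincide with its own limit $U_1$, hence to be $\mathcal{T}$-periodic. Your Bochner/semigroup instinct points in the right direction, but the spectral reformulation is neither needed nor straightforward to carry out for nonlinear viscosity solutions.
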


\begin{remark}
	We recall the definition of Bohr almost periodic function here.
	Let $f:I\to \R$ be continuous with $I\subset\R$. 
	Given $\epsilon>0$, we call $\tau>0$ an $\epsilon$-period for $f$ if and only if  
	\[
	|f(t+\tau)-f(t)|\leqslant \epsilon,\quad t\in I.
	\]
	By $\mathcal{B}(f,\epsilon)$, we denote the set of all $\epsilon$-periods for $f$. We say that   
	$f$ is Bohr almost periodic if for any $\epsilon>0$, $\mathcal{B}(f,\epsilon)$ is  a relatively dense  in $[0,+\infty)$. See for instance \cite{Le} for more on almost periodic solutions of differential equations.
\end{remark}

The notion of the viscosity solution to Hamilton-Jacobi equations was introduced by Crandall and Lions in their pioneering work \cite{CL}. Since then, much
progress  in this field has been made by many authors. See for instance \cite{Bar94,Bar97,CIL,I,L,T}  for general overviews and developments of the theory of viscosity solutions. Ishii and Mitake \cite{IM} studied the existence of a time periodic solution of the Dirichlet
problem for a class of contact Hamilton-Jacobi equations
where the Hamiltonian is essentially time periodic.
As we mentioned before, in \cite{WYZ23} we studied the existence and multiplicity of viscosity solutions to a class of contact Hamilton-Jacobi equations on the unit circle. 
For time periodic viscosity solution problem for classical Hamilton-Jacobi equations, see for example \cite{BR,BS,FM,Mi,WY,WL} for results of different types.

The rest of this paper is organized as follows. In Section 2, we recall some definitions and results on contact Hamiltonian systems and translations. We also prove several preliminary results which will be used later. Section 3 is devoted to the proof of Theorem \ref{thm:1}. The proofs of Theorem \ref{thm:2} and Corollary \ref{Cor:1} will be given in Section 4， Section 5, respectively. We provide an example in the last part.

%and 
%$$
%\| w_{x_0}(x,t) \|_\infty \leqslant  \max_{x\in \mathbb{T}^2 }\Big\{ \min_{k\in\mathbb{Z}}\frac{\lambda}{4}(x_1-x_1^0+k)^2 + \cdots +\min_{k\in\mathbb{Z}}\frac{\lambda}{4}(x_n-x_n^0+k)^2 \Big\}\leqslant  \frac{\lambda}{4}  \max_{x\in \mathbb{T}^2 } \| x-x_0\|_\infty
%$$
%
%If we choose $l^0:=\{x\in \R^n :  k_1 x_1+ \cdots +k_n x_n =0,x_1\in (0,\epsilon) \} \subset \mathbb{R}^n,\pi l^0=S^0$

\section{Preliminaries}

\subsection{Weak KAM theory for contact Hamiltonian systems}
Consider the contact Hamiltonian system
\begin{equation} \label{eq:ode1}
	\left\{
	\begin{aligned}
		\dot x&=\frac{\partial H }{\partial p}(p,u),\\
		\dot p &=-\frac{\partial H }{\partial u}(p,u) \cdot p, \quad (p,u)\in \R^n \times \R, \\ 
		\dot u&=\langle\frac{\partial H }{\partial p}(p,u),p\rangle -H(p,u).
	\end{aligned}
	\right.
\end{equation}
The associated Lagrangian is defined by
$$
L(\dot x,u):=\sup_{p\in \R^n} \{ \langle \dot x, p\rangle -H(p,u) \},\quad (\dot{x},u)\in \R^n\times\R.
$$

Let us recall some known results on the Aubry-Mather-Ma\~n\'e-Fathi theory for \eqref{eq:ode1} here. Most of the results in this section can be found in  \cite{WWY,WWY1,WWY2,WWY3}.

\medskip

\noindent $\bullet$ {\bf Variational principles}. 
First recall implicit variational principles for contact Hamiltonian system (\ref{eq:ode1}), which connect contact Hamilton-Jacobi equations and contact Hamiltonian systems. 
\begin{proposition}(\cite[Theorem A]{WWY})\label{IVP}
	For any given $x_0\in \T^n$, $u_0\in\mathbb{R}$, there exists a continuous function $h_{x_0,u_0}(x,t)$  defined on $\T^n\times(0,+\infty)$ satisfying	
	\begin{align}\label{2-1}
		h_{x_0,u_0}(x,t)=u_0+\inf_{\substack{\gamma(0)=x_0 \\  \gamma(t)=x} }\int_0^tL\big(\dot{\gamma}(\tau),h_{x_0,u_0}(\gamma(\tau),\tau)\big)d\tau
	\end{align}
	where the infimum is taken among the Lipschitz continuous curves $\gamma:[0,t]\rightarrow \T^n$.
	Moreover, the infimum in (\ref{2-1}) can be achieved. 
	If $\gamma$ is a curve achieving the infimum \eqref{2-1}, then $\gamma$ is of class $C^1$.
	Let 
	\begin{align*}
		x(s)&:=\gamma(s),\quad u(s):=h_{x_0,u_0}(\gamma(s),s),\,\,\,\qquad  p(s):=\frac{\partial L}{\partial \dot{x}}(\dot{\gamma}(s),u(s)).
	\end{align*}
	Then $(x(s),p(s),u(s))$  satisfies equations \eqref{eq:ode1} with 
	\begin{align*}
		x(0)=x_0, \quad x(t)=x, \quad \lim_{s\rightarrow 0^+}u(s)=u_0.
	\end{align*}
\end{proposition}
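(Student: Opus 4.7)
The plan is to construct $h_{x_0,u_0}$ as the unique fixed point of an integral operator. Given a continuous $h$ on $\T^n\times[0,T]$ I would set
\[
\Phi[h](x,t):=u_0+\inf_{\gamma(0)=x_0,\,\gamma(t)=x}\int_0^t L\bigl(\dot\gamma(\tau),h(\gamma(\tau),\tau)\bigr)\,d\tau,
\]
where the infimum runs over Lipschitz curves. Assumption (H3) together with the envelope identity $\partial L/\partial u=-\partial H/\partial u$ yields the uniform bound $|\partial L/\partial u|\leq \kappa$, so $L$ is globally $\kappa$-Lipschitz in $u$. I therefore expect
\[
|\Phi[h_1]-\Phi[h_2]|(x,t)\leq \kappa t\,\|h_1-h_2\|_{L^\infty(\T^n\times[0,t])},
\]
and after equipping $C(\T^n\times[0,T])$ with a weighted norm $\|h\|_*:=\sup_{(x,t)}e^{-2\kappa t}|h(x,t)|$, or alternatively by iterating on consecutive subintervals of length $<1/\kappa$, $\Phi$ becomes a strict contraction. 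Banach's fixed point theorem then produces a unique continuous $h_{x_0,u_0}$ satisfying \eqref{2-1} on $\T^n\times(0,+\infty)$, while superlinearity (H2) (inherited by $L$ through the Legendre transform) bounds the velocities in the infimum and gives the continuity of $\Phi[h]$ in $(x,t)$.

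For the existence of a minimizing curve at each fixed $(x,t)$ I would invoke the direct method. Superlinearity of $L$ in $\dot x$, uniform on compact $u$-ranges (which the $\kappa$-Lipschitz control in $u$ provides), combined with the Tonelli-type lower semicontinuity theorem forces every minimizing sequence to be equi-absolutely continuous and to admit a Lipschitz cluster point $\gamma$ realizing the infimum. Strict convexity (H1) then upgrades $\gamma$ to a $C^1$ curve by means of the associated Euler--Lagrange equation.

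To identify the dynamics, set $u(s):=h_{x_0,u_0}(\gamma(s),s)$ and $p(s):=\partial L/\partial \dot x(\dot\gamma(s),u(s))$. A Bellman-type principle (any sub-arc of a minimizer is a minimizer between its endpoints), combined with the fixed-point identity satisfied by $h_{x_0,u_0}$, gives $u(s)=u_0+\int_0^s L(\dot\gamma(\tau),u(\tau))\,d\tau$, so $\dot u=L(\dot\gamma,u)=\langle\dot\gamma,p\rangle-H(p,u)$, which is the third equation of \eqref{eq:ode1}. For the $\dot p$ equation, perturb $\gamma\mapsto\gamma+\epsilon\eta$ with $\eta(0)=\eta(t)=0$; the induced perturbation $\delta u$ of $u$ solves the linear ODE
\[
\delta\dot u=\tfrac{\partial L}{\partial \dot x}\!\cdot\dot\eta+\tfrac{\partial L}{\partial u}\,\delta u,\qquad \delta u(0)=0.
\]
Multiplying by the integrating factor $e^{-\int_0^\tau \partial L/\partial u\,ds}$, integrating by parts, and imposing $\delta u(t)=0$ for every admissible $\eta$ forces $\tfrac{d}{d\tau}\bigl(p\,e^{-\int_0^\tau \partial L/\partial u\,ds}\bigr)=0$, equivalently $\dot p=(\partial L/\partial u)\,p=-(\partial H/\partial u)\,p$. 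The remaining equation $\dot x=\partial H/\partial p$ is then immediate from the Legendre duality $p=\partial L/\partial \dot x$.

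The delicate step will be this last variational argument, because the minimized functional depends on $h_{x_0,u_0}$ itself along the varied curve and $h_{x_0,u_0}$ is only implicitly defined, so a naive Euler--Lagrange derivation is not legitimate. The clean way around this is to reformulate the problem as an ODE-constrained minimization with state $(\gamma,u)$ and dynamics $(\dot\gamma,\dot u)=(\dot\gamma,L(\dot\gamma,u))$, minimizing $u(t)$ subject to $\gamma(0)=x_0$, $\gamma(t)=x$, $u(0)=u_0$; the integrating-factor manipulation above is then precisely the Pontryagin maximum principle for that control problem, and the exponential factor $e^{-\int_0^\tau \partial L/\partial u\,ds}$ it produces is exactly the correction that turns the classical Euler--Lagrange equation into the contact Hamiltonian system \eqref{eq:ode1}.
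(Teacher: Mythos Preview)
The paper does not prove this proposition at all: it is quoted verbatim as \cite[Theorem A]{WWY} and used as a black box in the preliminaries section, so there is no ``paper's own proof'' to compare against. Your sketch is therefore not competing with anything in the present paper; it is a plausible outline of what the original reference does.

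As an outline your plan is sound, and in particular your final paragraph identifies the genuine difficulty correctly: because the integrand in \eqref{2-1} involves $h_{x_0,u_0}$ itself, one cannot write down a classical Euler--Lagrange equation directly, and the right way to recover \eqref{eq:ode1} is exactly the optimal-control reformulation you describe (minimize the terminal value $u(t)$ subject to $\dot u=L(\dot\gamma,u)$, $u(0)=u_0$). Two technical points would need tightening in a full proof. First, for the contraction argument you need $\Phi$ to map $C(\T^n\times[0,T])$ into itself, which requires an a~priori two-sided bound on $\Phi[h]$ and continuity of the value function; superlinearity alone gives lower bounds on $L$ only for bounded $u$, so one typically bootstraps using the sign $\partial L/\partial u\in[\delta,\kappa]$ (from (H3)) together with a Gronwall argument. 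Second, the ``Bellman-type principle'' you invoke to get $u(s)=u_0+\int_0^s L(\dot\gamma,u)\,d\tau$ is precisely the Markov property of $h_{x_0,u_0}$ (Proposition~\ref{pr-af}(4) here), which in \cite{WWY} is proved alongside the existence of $h$ rather than assumed; you should flag that this needs its own argument.
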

We call $h_{x_0,u_0}(x,t)$ a forward implicit action function associated with $L$
and the curves achieving the infimum in (\ref{2-1}) minimizers of $h_{x_0,u_0}(x,t)$.

%%%%%%%%%%%%%%%%%%%%%%%%%%%%%%%%%%%%%subsect 2.1
\medskip
%%%%%%%%%%%%%%%%%%%%%%%%%%%%%%%%%%%%%subsect 2.2

\noindent $\bullet$ {\bf Implicit action functions and solution semigroups}. 
We recall some properties of implicit action functions first.

\begin{proposition}(\cite[Theorem C, Theorem D]{WWY}, \cite[Proposition 3.1, Proposition 3.4, Lemma 3.1]{WWY1} )\label{pr-af} \ 
	\begin{itemize}
		\item [(1)] (Monotonicity).
		Given $x_{0} \in M, u_{0}, u_{1}, u_{2} \in \mathbb{R}$, 
		\begin{itemize}
			\item [(i)] if $u_{1}<u_{2}$, then $h_{x_{0}, u_{1}}(x, t)<h_{x_{0}, u_{2}}(x, t)$, for all $(x, t) \in \T^n \times(0,+\infty)$;
			\item [(ii)] if $L_{1}<L_{2}$, then $h_{x_{0}, u_{0}}^{L_{1}}(x, t)<h_{x_{0}, u_{0}}^{L_{2}}(x, t)$, for all $(x, t) \in \T^n \times(0,+\infty)$
			where $h_{x_{0}, u_{0}}^{L_{i}}(x, t)$ denotes the forward implicit action function associated with $L_{i}, i=1,2 .$
		\end{itemize}
		\item [(2)] (Lipschitz continuity).
		The function $(x_0,u_0,x,t)\mapsto h_{x_0,u_0}(x,t)$ is  Lipschitz  continuous on $\T^n\times[a,b]\times \T^n\times[c,d]$ for all real numbers $a$, $b$, $c$, $d$
		with $a<b$ and $0<c<d$.
		\item [(3)] (Minimality).
		Given $x_0$, $x\in \T^n$, $u_0\in\mathbb{R}$ and $t>0$, let
		$S^{x,t}_{x_0,u_0}$ be the set of the solutions $(x(s),p(s),u(s))$ of (\ref{eq:ode1}) on $[0,t]$ with $x(0)=x_0$, $x(t)=x$, $u(0)=u_0$.
		Then
		\[
		h_{x_0,u_0}(x,t)=\inf\{u(t): (x(s),p(s),u(s))\in S^{x,t}_{x_0,u_0}\}, \quad \forall (x,t)\in \T^n\times(0,+\infty).
		\]
		\item [(4)] (Markov property).
		Given $x_0\in \T^n$, $u_0\in\mathbb{R}$, 
		\[
		h_{x_0,u_0}(x,t+s)=\inf_{y\in \T^n}h_{y,h_{x_0,u_0}(y,t)}(x,s)
		\]
		for all  $s$, $t>0$ and all $x\in \T^n$. Moreover, the infimum is attained at $y$ if and only if there exists a minimizer $\gamma$ of $h_{x_0,u_0}(x,t+s)$ with $\gamma(t)=y$.
		\item [(5)] (Reversibility).
		Given $x_0$, $x\in \T^n$ and $t>0$, for each $u\in \mathbb{R}$, there exists a unique $u_0\in \mathbb{R}$ such that
		\[
		h_{x_0,u_0}(x,t)=u.
		\]
	\end{itemize}
\end{proposition}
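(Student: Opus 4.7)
The plan is to derive all five items from the implicit variational principle (Proposition \ref{IVP}) combined with the monotonicity of the Lagrangian in its $u$-argument, which follows from assumption (H3) via the Legendre transform: since $\partial H/\partial u<0$, we get $\partial L/\partial u>0$ with $0<\partial L/\partial u\leq\kappa$. The main subtlety throughout is that $h_{x_0,u_0}$ is \emph{implicitly} defined: the value $h_{x_0,u_0}(\gamma(\tau),\tau)$ sits inside the integrand of its own defining equation \eqref{2-1}, so naive comparison between candidate values for different initial data does not close.

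For \textbf{(1)(i) Monotonicity}, I would fix $(x,t)$, pick a minimizer $\gamma$ for $h_{x_0,u_2}(x,t)$, and compare the evaluations of the two implicit actions along this common curve. Setting $\Delta(\tau):=h_{x_0,u_2}(\gamma(\tau),\tau)-h_{x_0,u_1}(\gamma(\tau),\tau)$, the defining integral equation gives
\[
\Delta(\tau)=(u_2-u_1)+\int_0^\tau\bigl[L(\dot\gamma,h_{x_0,u_2}(\gamma,s))-L(\dot\gamma,h_{x_0,u_1}(\gamma,s))\bigr]\,ds,
\]
which, using $0<\partial L/\partial u\leq\kappa$, yields a Gronwall inequality of the form $|\Delta(\tau)|\leq (u_2-u_1)e^{\kappa\tau}$ together with $\Delta(\tau)\geq 0$; a symmetric argument starting from a minimizer for $u_1$ gives the strict inequality. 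Item (1)(ii) follows from the same kind of comparison using $L_1<L_2$. This Gronwall-type argument is the \textbf{main obstacle} and is the workhorse for essentially every other item.

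For \textbf{(2) Lipschitz continuity}, the Tonelli conditions (H1)--(H2) together with the monotonicity from (1) restrict minimizers to a set where the associated momenta $p(s)$ stay in a fixed compact of $\R^n$ on any time interval $[c,d]\subset(0,+\infty)$; perturbing any one of the four arguments $(x_0,u_0,x,t)$ and using the minimizer of the unperturbed problem as a competitor (after a small smooth reparametrization to match endpoints) yields one-sided Lipschitz bounds, and hence the full Lipschitz estimate on the stated box.

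For \textbf{(3) Minimality} and \textbf{(4) Markov}, I would argue as follows. Item (3) is essentially Proposition \ref{IVP}: any Lipschitz minimizer is automatically a $C^1$ solution of \eqref{eq:ode1}, so the infimum over Lipschitz curves coincides with the infimum over the set $S^{x,t}_{x_0,u_0}$. For (4), I would prove both directions: ``$\leq$'' by taking a minimizer $\gamma$ of $h_{x_0,u_0}(x,t+s)$, setting $y=\gamma(t)$, and noting that the two restrictions to $[0,t]$ and $[t,t+s]$ are admissible competitors whose action sum equals $h_{x_0,u_0}(x,t+s)$; ``$\geq$'' by concatenating, for each $y$, minimizers of $h_{x_0,u_0}(y,t)$ and $h_{y,h_{x_0,u_0}(y,t)}(x,s)$ and using additivity of the action together with (1) to control the implicit $u$-argument across the splice. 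The characterization of the equality case reads off the same construction. Finally, \textbf{(5) Reversibility} is a consequence of (1)(i), (2), and the intermediate value theorem: $u_0\mapsto h_{x_0,u_0}(x,t)$ is continuous and strictly increasing, and the Gronwall-type two-sided bound $|\Delta(t)|\leq (u_2-u_1)e^{\kappa t}$ with the lower bound coming from $\partial L/\partial u>0$ forces this map to be a homeomorphism of $\R$ onto $\R$, giving the unique $u_0$.
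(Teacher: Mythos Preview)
The paper does not prove this proposition; it is quoted verbatim as a collection of known results from the cited references \cite{WWY} and \cite{WWY1}, and no argument for it appears anywhere in the present paper. Your sketch is along the lines of the proofs in those references: Gronwall-type comparison along minimizers for (1), a~priori compactness of minimizing orbits for (2), the implicit variational principle plus dynamic programming for (3)--(4), and monotonicity-plus-continuity for (5).

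One small correction worth flagging: in your argument for (1)(i), the displayed relation for $\Delta(\tau)$ is not an equality. If $\gamma$ is a minimizer for $h_{x_0,u_2}$, then along $\gamma$ you have equality for the $u_2$-action but only the competitor inequality
\[
h_{x_0,u_1}(\gamma(\tau),\tau)\ \leq\ u_1+\int_0^\tau L\bigl(\dot\gamma(s),h_{x_0,u_1}(\gamma(s),s)\bigr)\,ds
\]
for the $u_1$-action, so what you actually get is $\Delta(\tau)\geq(u_2-u_1)+\int_0^\tau[\,\cdots\,]\,ds$. This still gives $\Delta>0$ via a first-hitting-time argument (or, as you say, by switching to a minimizer for $u_1$ and arguing symmetrically), so the conclusion is fine; just be careful not to claim an identity when splicing implicit actions along a curve that is a minimizer for only one of the two initial data.
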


\medskip

Let us recall a  semigroup of operators introduced in \cite{WWY1}.  Define a family of nonlinear operators $\{T^-_t\}_{t\geqslant 0}$ from $C(\T^n,\mathbb{R})$ to itself as follows. For each $\varphi\in C(\T^n,\mathbb{R})$, denote by $(x,t)\mapsto T^-_t\varphi(x)$ the unique continuous function on $ (x,t)\in \T^n\times[0,+\infty)$ such that
\begin{equation*}\label{eq:semigroup}
	T^-_t\varphi(x)=\inf_{\gamma}\left\{\varphi(\gamma(0))+\int_0^tL(\dot{\gamma}(\tau),T^-_\tau\varphi(\gamma(\tau)))d\tau\right\},
\end{equation*}
where the infimum is taken among the absolutely continuous curves $\gamma:[0,t]\to \T^n$ with $\gamma(t)=x$.  It was also proved  in \cite{WWY1} that $\{T^-_t\}_{t\geqslant 0}$ is a semigroup of operators and the function $(x,t)\mapsto T^-_t\varphi(x)$ is a viscosity solution of $\partial_tw+H(x,\partial_xw,w)=0$ with initial condition $w(x,0)=\varphi(x)$. Thus, we call $\{T^-_t\}_{t\geqslant 0}$ the backward solution semigroup.

%Similarly, one can define another semigroup of operators $\{T^+_t\}_{t\geq 0}$, called the forward solution semigroup 
%by
%\begin{equation*}\label{2-4}
%	T^+_t\varphi(x)=\sup_{\gamma}\left\{\varphi(\gamma(t))-\int_0^tL(\gamma(\tau),\dot{\gamma}(\tau),T^+_{t-\tau}\varphi(\gamma(\tau)))d\tau\right\},
%\end{equation*}
%where the supremum is taken among curves $\gamma\in C^{ac}([0,t],M)$ with $\gamma(0)=x$.

We collect some basic properties of the solution semigroup. 

\begin{proposition}(\cite[Propositions 4.1, 4.2, 4.3]{WWY1})\label{pr-sg}
	Let $\varphi$, $\psi\in C(\T^n,\R)$. 
	\begin{itemize}
		\item [(1)](Monotonicity). If $\psi<\varphi$, then $T^{-}_t\psi< T^{-}_t\varphi$, $\forall t\geqslant 0$.
		\item [(2)](Local Lipschitz continuity). The function $(x,t)\mapsto T^{-}_t\varphi(x)$ is locally Lipschitz on $\T^n\times (0,+\infty)$.
		\item[(3)]($e^{\kappa t}$-expansiveness). $\|T^{-}_t\varphi-T^{-}_t\psi\|_\infty\leqslant e^{\kappa t}\cdot\|\varphi-\psi\|_\infty$,  $\forall t\geqslant 0$.
		\item[(4)] (Continuity at the origin). $\lim_{t\rightarrow0^+}T^{-}_t\varphi=\varphi$.
		\item[(5)] (Representation formula). For each  $\varphi\in C(\T^n,\R)$,	
			$T^-_t\varphi(x)=\inf_{y\in \T^n}h_{y,\varphi(y)}(x,t)$,  $\forall (x,t)\in \T^n\times(0,+\infty)$.
		\item[(6)] (Semigroup). $\{T^{-}_t\}_{t\geqslant 0}$  is a one-parameter semigroup of operators. For all $x_0$, $x\in \T^n$, all $u_0\in\mathbb{R}$ and all $s$, $t>0$, 	 
		$$
			T^-_sh_{x_0,u_0}(x,t)=h_{x_0,u_0}(x,t+s), \quad T^-_{t+s}\varphi(x)=\inf_{y\in \T^n}h_{y,T^-_s\varphi(y)}(x,t).
		$$
	\end{itemize}
\end{proposition}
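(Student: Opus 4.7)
The plan is to derive all six properties from the representation formula (5), which is the bridge between the solution semigroup and the implicit action functions already studied in Proposition \ref{pr-af}. I would therefore prove (5) first, and then the other items follow by transferring the corresponding properties of $h_{x_0,u_0}(x,t)$ through an infimum over the base point.

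For (5), starting from the variational definition of $T^-_t\varphi$, I would split the infimum over Lipschitz curves $\gamma:[0,t]\to\T^n$ with $\gamma(t)=x$ according to the initial point $y=\gamma(0)$. Conditional on this initial point, the inner infimum is exactly the one defining the forward implicit action function $h_{y,\varphi(y)}(x,t)$ via (\ref{2-1}); so $T^-_t\varphi(x)=\inf_{y\in\T^n}h_{y,\varphi(y)}(x,t)$. Compactness of $\T^n$ and continuity of $y\mapsto h_{y,\varphi(y)}(x,t)$ from Proposition \ref{pr-af}(2) ensure the infimum is attained, which will be useful later.

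Given (5), properties (1), (2), and (6) are largely soft. Monotonicity (1) is immediate from Proposition \ref{pr-af}(1)(i): if $\psi<\varphi$ then $h_{y,\psi(y)}(x,t)<h_{y,\varphi(y)}(x,t)$ pointwise, and a standard minimizer-comparison argument (choose the minimizer $y^*$ realizing $T^-_t\varphi(x)$ and use a uniform strict gap coming from continuity of $\varphi-\psi$ on the compact torus) upgrades the strict inequality to the infima. Local Lipschitz continuity (2) follows because $(y,u_0,x,t)\mapsto h_{y,u_0}(x,t)$ is Lipschitz on compact subsets with $t$ bounded away from $0$, and $y\mapsto\varphi(y)$ is continuous on the compact $\T^n$; combining this with (5) transfers Lipschitz control to $T^-_t\varphi$. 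The semigroup property (6) is obtained from the Markov property Proposition \ref{pr-af}(4) applied inside the infimum in (5), together with the identity $T^-_sh_{x_0,u_0}(x,t)=h_{x_0,u_0}(x,t+s)$ which itself comes from (5) applied with $\varphi(\cdot)=h_{x_0,u_0}(\cdot,t)$ and the fact that the optimal trajectory in the Markov splitting passes through the minimizer. Continuity at the origin (4) follows by taking $\gamma$ to be the constant curve $\gamma\equiv x$ to get an upper bound $T^-_t\varphi(x)\le\varphi(x)+O(t)$, and conversely using Lipschitz estimates on minimizers (finite speed and the uniform bound on $L$ along minimizers) to show $T^-_t\varphi(x)\ge\varphi(x)-o(1)$ as $t\to 0^+$.

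The main obstacle, and the only item requiring the full strength of (H3), is the $e^{\kappa t}$-expansiveness (3). The clean way I would prove it is to consider the function $\Delta(t):=\|T^-_t\varphi-T^-_t\psi\|_\infty$, compare the two semigroups, and use (H3) to bound the error growth. Concretely, fix $x\in\T^n$ and let $\gamma$ realize the infimum in the definition of $T^-_t\varphi(x)$, with associated trajectory $(x(s),p(s),u(s))$ from Proposition \ref{IVP}; then set $v(s)$ to be the solution of the same ODE for $u$ but with initial value $\psi(\gamma(0))$ instead of $\varphi(\gamma(0))$. By the upper envelope definition of $T^-_t\psi$, one has $T^-_t\psi(x)\le v(t)$, and the ODE comparison for the $u$-equation using $-\kappa\le\partial_uH\le-\delta$ gives $|v(s)-u(s)|\le e^{\kappa s}|\psi(\gamma(0))-\varphi(\gamma(0))|\le e^{\kappa s}\|\varphi-\psi\|_\infty$. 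Swapping the roles of $\varphi$ and $\psi$ completes the estimate. The delicate point is the Gronwall step: the $u$-equation in (\ref{eq:ode1}) is not a pure ODE in $u$ because $p$ and $\dot x$ depend on $u$ too, so one has to be careful to isolate the effect of $\partial_u H$; this is exactly where (H3) enters and where I would expect the longest technical work.
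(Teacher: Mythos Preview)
The paper does not give a proof of this proposition; it is quoted verbatim from \cite[Propositions 4.1, 4.2, 4.3]{WWY1} as background material and is used without argument. There is therefore no proof in the present paper to compare your proposal against.

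That said, your overall plan---establish the representation formula (5) first and then read off the remaining items from the properties of $h_{x_0,u_0}$ in Proposition~\ref{pr-af}---is exactly the architecture of the original proof in \cite{WWY1}, and your arguments for (1), (2), (4), (5), (6) are correct in outline. For (3), however, the sketch has a gap. After fixing the minimizer $\gamma$ for $T^-_t\varphi$ and defining $v$ by $\dot v=L(\dot\gamma,v)$ with $v(0)=\psi(\gamma(0))$, the inequality $T^-_t\psi(x)\le v(t)$ is not immediate: plugging $\gamma$ into the variational formula for $T^-_t\psi$ gives an integrand involving $T^-_\tau\psi(\gamma(\tau))$, not $v(\tau)$, so you are comparing against the wrong function. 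The clean fix is to first prove the pointwise estimate $|h_{y,u_1}(x,t)-h_{y,u_2}(x,t)|\le e^{\kappa t}|u_1-u_2|$ directly from the integral identity \eqref{2-1} via Gronwall (noting that (H3) is equivalent, by Legendre duality, to $|\partial_u L|\le\kappa$), and then to combine this with (5) through a standard $\inf$-comparison. This bypasses the characteristic system entirely and dissolves the coupling issue you flagged at the end.
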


%%%%%%%%%%%%%%%%%%%%%%%%%%%%%%%%%%%%%subsect 2.3

\medskip

%%%%%%%%%%%%%%%%%%%%%%%%%%%%%%%%%%%%%subsect 2.4
\noindent $\bullet$ {\bf Weak KAM solutions}. 
Following Fathi (see, for instance, \cite{Fat-b}), one can define weak KAM solutions of  
\begin{align}\label{wkam}
	H(\partial_xu(x),u(x))=0.	
\end{align}

\begin{definition}
	\label{bwkam}
	A function $u\in C(\T^n,\R)$ is called a backward weak KAM solution of \eqref{wkam} if
	\begin{itemize}
		\item [(1)] for each continuous piecewise $C^1$ curve $\gamma:[t_1,t_2]\rightarrow \T^n$, we have
		\begin{align}\label{do}
			u(\gamma(t_2))-u(\gamma(t_1))\leqslant\int_{t_1}^{t_2}L(\dot{\gamma}(s),u(\gamma(s)))ds;
		\end{align}
		\item [(2)] for each $x\in \T^n$, there exists a $C^1$ curve $\gamma:(-\infty,0]\rightarrow M$ with $\gamma(0)=x$ such that
		\begin{align}\label{cali1}
			u(x)-u(\gamma(t))=\int^{0}_{t}L(\dot{\gamma}(s),u(\gamma(s)))ds, \quad \forall t<0.
		\end{align}
	\end{itemize}
	
	Similarly, 	a function $v\in C(\T^n,\R)$ is called a forward weak KAM solution of  \eqref{wkam} if it satisfies (1) and  
	for each $x\in \T^n$, there exists a $C^1$ curve $\gamma:[0,+\infty)\rightarrow \T^n$ with $\gamma(0)=x$ such that
	\begin{align}\label{cali2}
		v(\gamma(t))-v(x)=\int_{0}^{t}L(\dot{\gamma}(s),v(\gamma(s)))ds,\quad \forall t>0.
	\end{align}
	We say that $u$ in \eqref{do} is a dominated function by $L$. We call curves satisfying \eqref{cali1} (resp. \eqref{cali2}),  $(u,L,0)$-calibrated curves (resp. $(v,L,0)$-calibrated curves). We use $\mathcal{S}_-$ (resp. $\mathcal{S}_+$) to denote the set of all backward (resp. forward) weak KAM solutions. Backward weak KAM solutions and viscosity solutions are the same in the setting of this paper.
\end{definition}

%\begin{proposition}( \cite[Proposition 2.7]{WWY2})\label{pr-fix}\ 
%	\begin{itemize}
%		\item
%		[(1)] $u\in\mathcal{S}_-$ if and only if $T^-_tu=u$ for all $t\geqslant 0$.
%		\item [(2)] $v\in\mathcal{S}_+$ if and only if $T^+_tv=v$ for all $t\geqslant 0$.
%	\end{itemize}
%\end{proposition}

\begin{proposition}{\rm{\cite[Theorem 1.2]{WWY1}}}\label{prop:liminf}
	Let $\varphi\in C(\T^n,\R)$. If the function $(x,t)\mapsto T_t^- \varphi(x)$ is bounded on $\T^n \times [0,+\infty)$, then  
	$ \displaystyle \liminf_{t\to +\infty} T^-_t \varphi(x) $ is a  viscosity solution of equation \eqref{wkam}. 
\end{proposition}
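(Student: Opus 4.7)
The plan is to show that $u(x):=\liminf_{t\to+\infty}T^-_t\varphi(x)$ is a backward weak KAM solution of $H(\partial_xu,u)=0$; by the remark at the end of Definition 2.1, this coincides with being a viscosity solution in our setting. The argument proceeds in three steps: compactness of the evolution, domination of $u$ by $L$, and extraction of a backward calibrated curve at each point.

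First I would establish that $\{T^-_t\varphi\}_{t\geqslant 1}$ is uniformly bounded (by hypothesis) and equi-Lipschitz in $x$ on $\T^n$, using the representation $T^-_t\varphi(x)=\inf_y h_{y,\varphi(y)}(x,t)$ from Proposition 2.3 (5), the Lipschitz regularity of $h_{y,u_0}$ in Proposition 2.2 (2), and the superlinearity (H2) to localize the minimizing $y$ uniformly in $t\geqslant 1$. Hence $u$ is Lipschitz on $\T^n$ and the family is precompact in $C(\T^n,\R)$. To obtain domination, I would introduce the auxiliary family $f_T(y):=\inf_{t\geqslant T}T^-_t\varphi(y)$, which is equi-Lipschitz, monotone increasing in $T$, and converges pointwise to $u$; by Dini's theorem on the compact torus $f_T\to u$ uniformly, so for any $\varepsilon>0$ there is $T_\varepsilon$ with $T^-_t\varphi\geqslant u-\varepsilon$ on $\T^n$ for all $t\geqslant T_\varepsilon$. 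The monotonicity of $T^-_s$ together with its $e^{\kappa s}$-expansiveness (Proposition 2.3 (1), (3)) then gives
\begin{equation*}
T^-_{t+s}\varphi(x)=T^-_sT^-_t\varphi(x)\geqslant T^-_s(u-\varepsilon)(x)\geqslant T^-_su(x)-e^{\kappa s}\varepsilon
\end{equation*}
for $t\geqslant T_\varepsilon$. Letting $t\to+\infty$ and then $\varepsilon\to 0$ yields $T^-_su\leqslant u$ for every $s\geqslant 0$, which by the standard equivalence in contact weak KAM theory translates to condition (1) of Definition 2.1, i.e.\ $u$ is dominated by $L$.

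To produce a backward calibrated curve at a given $x\in\T^n$ (condition (2)), I would pick $t_n\to+\infty$ with $T^-_{t_n}\varphi(x)\to u(x)$, take the minimizing curves $\gamma_n:[0,t_n]\to\T^n$ ending at $x$ supplied by Proposition 2.1, and extract a $C^1$ limit $\gamma:(-\infty,0]\to\T^n$ with $\gamma(0)=x$ via Tonelli a priori speed bounds plus an Arzelà--Ascoli and diagonal argument on the tail reparametrizations $s\mapsto\gamma_n(t_n+s)$. Combining the exact action identity along each $\gamma_n$ with the uniform lower bound $T^-_t\varphi\geqslant u-\varepsilon$ from the Dini step and the matching upper bound coming from the minimality $T^-_{t_n}\varphi(x)\to u(x)$, one passes to the limit in both directions to obtain
\begin{equation*}
u(x)-u(\gamma(\tau))=\int_\tau^0L(\dot\gamma(\sigma),u(\gamma(\sigma)))\,d\sigma,\qquad \tau<0,
\end{equation*}
so $u\in\mathcal{S}_-$ and the proof is complete.

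The main obstacle will be this last step, transferring the exact action identity from the prelimit minimizers $\gamma_n$ to the limit curve $\gamma$. While the domination inequality from the previous step automatically yields $\leqslant$ along $\gamma$, the reverse inequality requires combining the minimizing property of $\gamma_n$ with the $\varepsilon$-uniform control of $T^-_t\varphi$ from Dini's theorem, so that no loss of $\varepsilon$ accumulates along the tails in the limit; the pointwise liminf definition of $u$ alone controls $u$ only at the base point $x$, which is why the uniform Dini convergence is indispensable.
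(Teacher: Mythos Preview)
The paper does not supply its own proof of this proposition: it is quoted verbatim as \cite[Theorem 1.2]{WWY1} and used as a black box in Section~3 (at the very end of the proof of Lemma~\ref{lem:long time}). So there is no in-paper argument to compare your proposal against.

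That said, your outline is a sound adaptation of the classical Fathi weak KAM argument to the contact setting and is essentially the route taken in \cite{WWY1}. The Dini step giving the uniform lower bound $T^-_t\varphi\geqslant u-\varepsilon$ for $t$ large is the right device to upgrade the pointwise $\liminf$ to something usable, and it indeed yields $T^-_su\leqslant u$ and hence domination. In the calibration step, the monotonicity of $L$ in $u$ (which follows from (H3): $H$ decreasing in $u$ implies $L$ increasing in $u$) lets you pass the uniform lower bound on $T^-_{t_n+\sigma}\varphi$ through the integrand to get
\[
u(x)-u(\gamma(\tau))\;\geqslant\;\int_\tau^0 L\big(\dot\gamma(\sigma),\,u(\gamma(\sigma))-\varepsilon\big)\,d\sigma,
\]
and then $\varepsilon\to 0$ closes the equality against the domination inequality. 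You might make this monotonicity of $L$ in $u$ explicit in your write-up, since it is the contact-specific ingredient that is absent from the classical argument; likewise, the dynamic programming identity $T^-_{t_n}\varphi(x)=T^-_s\varphi(\gamma_n(s))+\int_s^{t_n}L(\dot\gamma_n,T^-_\tau\varphi(\gamma_n))\,d\tau$ along a minimizer deserves to be stated precisely rather than left implicit.
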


Let $\widetilde\Phi^H_t$ denote the local flow of \eqref{eq:ode1}.
\begin{definition}(\cite[Definitions 3.1, 3.2]{WWY2})\label{gl}
	\begin{itemize}
		\item[(1)] (Globally minimizing orbits)
		A curve $(x(\cdot),u(\cdot)):\R \to \T^n \times \R $ is called globally minimizing , if it is locally Lipschitz and for each $t_1<t_2\in\R$, there holds 
		$$
		u(t_2)=h_{x(t_1),u(t_1)}(x(t_2),t_2-t_1 ).
		$$
		\item[(2)](Static curves) A curve $(x(\cdot),u(\cdot)):\R \to \T^n \times \R $ is called static, if it is globally minimizing and for each $t_1,t_2\in \R$, there holds
		$$
		u(t_2)=\inf_{s>0} h_{x(t_1),u(t_1)}(x(t_2),s).
		$$
		If a curve $(x(\cdot),u(\cdot)):\T^n \times \R$ is static, then $(p(t),u(t))$ with $t\in \R$  is an orbit of $\widetilde\Phi^H_t$, where $p(t)=\frac{\partial L}{\partial \dot{x}}(\dot x(t),u(t )) $. We call it a static orbit of $\widetilde\Phi^H_t$.
	\end{itemize}
\end{definition}

\begin{definition}(\cite[Definition 3.3]{WWY2})\label{audeine}
	We call the set of all static orbits Aubry set of $H$, denoted by $\tilde{\mathcal{A}}$. We call $\mathcal{A}:=\mathrm{proj}(\tilde{\mathcal{A}})$ the projected Aubry set, where $\mathrm{proj}:T^*\T^n\times\R\rightarrow \T^n$ denotes the canonical projection. 
\end{definition}

Note that $u_0\equiv c$ is a classical solution  of \eqref{wkam}, where $c$ is as in Remark \ref{1}.
Under the assumptions imposed on $H$, the Aubry set of \eqref{eq:ode1} has the form  $\{(x,p,u);x\in \T^n,p=0,u=c\}$. The projected Aubry set is $\mathbb{T}^n$ on which the flow is linear.

\begin{proposition}(\cite[Theorem 2]{WWY3})\label{prop:WWY-longtime-JDE}
	Denote by $w_\varphi(x,t)$ the unique viscosity solution of \eqref{eq:HJe} with $w_\varphi(x,0)=\varphi(x)$, and by $u_+$ the unique forward weak KAM solution of \eqref{wkam}. Then
	\begin{itemize}
		\item [(D1)]$w_\varphi(x,t)$ is bounded on $\T^n \times[0,+\infty)$ if and only if 
		$\varphi \geq u_+$ everywhere and 
		there exists $x_{0} \in \T^n$ such that $\varphi\left(x_{0}\right)=u_+\left(x_{0}\right)$.
		In this case, for each $\varphi$ satisfying the above two conditions, there is $T_{\varphi}>0$ such that
		\[
		|w_\varphi(x,t)|\leq K,\quad\forall  (x,t)\in \T^n\times(T_{\varphi},+\infty)
		\]
		for some $K>0$ independent of $\varphi$.
		\item [(D2)]if there is $x_{0} \in \T^n$ such that $\varphi\left(x_{0}\right)<u_+\left(x_{0}\right)$, then $\lim _{t \rightarrow+\infty} w_\varphi(x,t)=-\infty$ uniformly on $x \in \T^n$.
		\item [(D3)] if $\varphi>u_{+}$ everywhere, then $\lim _{t \rightarrow+\infty} w_\varphi(x,t)=+\infty$ uniformly on $x \in \T^n$.
	\end{itemize}
	
\end{proposition}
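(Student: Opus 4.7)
The plan is to first identify the unique forward weak KAM solution explicitly, and then derive the three regimes by comparing $w_\varphi$ with $u_+$ via the monotonicity and representation formula of Proposition~\ref{pr-sg}. Since $H(0,c)=0$ by Remark~\ref{1}, the constant function $u\equiv c$ is a classical (hence viscosity, hence weak KAM) solution of \eqref{wkam}; uniqueness of $u_+$ then forces $u_+\equiv c$. Along any $(u_+,L,0)$-calibrated curve $\gamma$ one has $L(\dot\gamma,c)\equiv 0$, and Legendre duality combined with $H(0,c)=0$ pins $\dot\gamma\equiv\omega=\frac{\partial H}{\partial p}(0,c)$. The calibrated curves are therefore the straight lines $\gamma(s)=\gamma(0)+s\omega$ on $\T^n$, matching $\tilde{\mathcal A}=\{(x,0,c):x\in\T^n\}$.

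I would then dispose of (D3) and (D2) as follows. For any constant initial datum $v_0\in\R$, $T^-_t v_0$ is spatially constant and equals $v(t)$ solving $\dot v=-H(0,v)$, $v(0)=v_0$; by (H3), $-H(0,v)\geq\delta(v-c)$ for $v\geq c$ and $-H(0,v)\leq\delta(v-c)$ for $v\leq c$, so $v(t)$ diverges exponentially to $\pm\infty$ according to the sign of $v_0-c$. For (D3), compactness gives $\varphi\geq c+\epsilon_0$ for some $\epsilon_0>0$, and monotonicity (Proposition~\ref{pr-sg}(1)) yields $w_\varphi\geq v(t)\to+\infty$ uniformly. For (D2), along the calibrated line $\gamma(s)=x_0+s\omega$ the viscosity solution $w_\varphi$ is dominated by $L$:
\begin{equation*}
w_\varphi(\gamma(t),t)-\varphi(x_0)\leq\int_0^t L\big(\omega,w_\varphi(\gamma(s),s)\big)\,ds.
\end{equation*}
Setting $f(s):=w_\varphi(\gamma(s),s)-c$ and using $L(\omega,c)=0$ together with $\frac{\partial L}{\partial u}=-\frac{\partial H}{\partial u}\geq\delta$ yields $L(\omega,f+c)\leq\delta f$ whenever $f\leq 0$; Gronwall then forces $f(t)\leq(\varphi(x_0)-c)e^{\delta t}\to -\infty$. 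The upgrade from $w_\varphi(\gamma(t),t)\to -\infty$ to uniform convergence follows from the $x$-Lipschitz (in fact semiconcave) regularisation of $T^-_t\varphi$ on $\T^n\times[t_0,+\infty)$ afforded by Proposition~\ref{pr-sg}(2).

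The ``only if'' direction of (D1) is immediate from (D2) and (D3). The lower bound in the ``if'' direction comes from monotonicity together with $T^-_t c\equiv c$. For the upper bound, the representation formula (Proposition~\ref{pr-sg}(5)) and $\varphi(x_0)=c$ give $w_\varphi(x,t)\leq h_{x_0,c}(x,t)$; Markov (Proposition~\ref{pr-af}(4)) applied at $y=x_0+(t-1)\omega$, together with $h_{x_0,c}(y,t-1)\leq c$ (realised by the static orbit via the minimality clause of Proposition~\ref{pr-af}(3)) and monotonicity in $u_0$, yields $h_{x_0,c}(x,t)\leq h_{y,c}(x,1)$, which is uniformly bounded by the Lipschitz estimate of Proposition~\ref{pr-af}(2). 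The genuinely delicate point -- and the main obstacle -- is the $\varphi$-independent constant $K$ for $t>T_\varphi$: one must show that the forward orbit $\{w_\varphi(\cdot,t)\}_{t\geq T_\varphi}$ is eventually trapped in a compact subset of $C(\T^n,\R)$ determined only by $\tilde{\mathcal A}$. This is established by combining Tonelli semiconcavity, Arzel\`a-Ascoli, and the identification of the $\omega$-limit set with viscosity solutions of \eqref{wkam} (Proposition~\ref{prop:liminf}), together with the full weak KAM description of such solutions from \cite{WWY3}.
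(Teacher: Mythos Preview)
The paper does not prove this proposition at all: it is quoted verbatim from \cite[Theorem 2]{WWY3} and used as a black box in the Preliminaries section, so there is no ``paper's own proof'' to compare against. Your sketch is an attempt to reprove a cited result in the special $x$-independent setting of the present paper, which is a legitimate exercise but not what the paper does.

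On the substance of your sketch: the identification $u_+\equiv c$ is correct and matches how the paper uses the proposition (compare the hypothesis $\min_{x}\varphi(x)=c$ in Lemma~\ref{lem:lem-2} and Lemma~\ref{lem:long time}). Your treatment of (D3) via the scalar ODE $\dot v=-H(0,v)$ is clean and correct. In (D2), the domination/Gronwall argument along $\gamma(s)=x_0+s\omega$ correctly yields $w_\varphi(\gamma(t),t)\to-\infty$, but the passage to uniform divergence on $\T^n$ is not justified by Proposition~\ref{pr-sg}(2): that proposition gives only \emph{local} Lipschitz continuity, and the Lipschitz constant in $x$ may in principle degenerate as the solution becomes unbounded below. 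A cleaner route is to write $w_\varphi(x,t+1)\leq h_{\gamma(t),\,w_\varphi(\gamma(t),t)}(x,1)$ via Proposition~\ref{pr-sg}(5)--(6), and then control $h_{y,u_0}(x,1)$ for $u_0\to-\infty$ directly from the variational formula (take the straight-line curve and use that $L$ is increasing in $u$). In (D1), your upper bound $w_\varphi(x,t)\leq h_{x_0+(t-1)\omega,\,c}(x,1)$ is correct and already gives a $\varphi$-independent bound $K:=\sup_{y,x}h_{y,c}(x,1)$ for $t\geq 1$; you do not need the Arzel\`a--Ascoli/$\omega$-limit machinery you invoke at the end, which is overkill here and not how \cite{WWY3} proceeds either.
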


\subsection{Two key lemmas}
For any $x_0:=( x^0_1, x^0_2,\cdots x^0_n)\in \mathbb{T}^n$ and any $\epsilon>0$, define a function on $\T^n\times[0,+\infty)$ by
  \begin{equation}\label{eq:w}
  \begin{split}
  	 W^\epsilon_{x_0}(x,t)=  c+\epsilon \sum_{i=1}^n \Big( 1+\sin   f_i(x_i,t) \Big)  ,\quad
   f_i(x_i,t) 	:=2\pi x_i- 2\pi x^0_i-\frac{\pi}{2} -2\pi  \omega_i t,  
  \end{split} 	 
  \end{equation}
where $c$ is as in Remark \ref{1}.
\begin{lemma}\label{lem:subsolution-w}
	For any $x_0\in \mathbb{T}^n$, there is $\epsilon_0>0$ such that $W^\epsilon_{x_0}(x,t)$ is a subsolution of \eqref{eq:HJe} for any $\epsilon \in  (0,\epsilon_0 ]$. Moreover, for any $(x,t)\in \mathbb{T}^n\times[0,+\infty)$, 
	$$
	W^\epsilon _{x_0}(x,t) \geqslant c , \quad  \{x\in \mathbb{T}^n:W^\epsilon_{x_0}(x,0)=c \} =\{x_0\}, \quad h_{x_0,c}(x,t) \geqslant W^\epsilon_{x_0}(x,t).
	$$
\end{lemma}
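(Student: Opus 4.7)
The plan is to verify the subsolution inequality directly by Taylor-expanding $H$ around $(0,c)$, then deduce the three listed properties, the first two being immediate and the last one following from the semigroup representation formula together with the comparison principle.

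First I would compute the relevant derivatives: $\partial_t W^\epsilon_{x_0}(x,t) = -2\pi\epsilon\sum_i \omega_i \cos f_i$ and $\partial_{x_i} W^\epsilon_{x_0}(x,t) = 2\pi\epsilon \cos f_i$. Setting $p=\partial_x W^\epsilon_{x_0}$ and $u=W^\epsilon_{x_0}$ and expanding $H$ at $(0,c)$, the first-order terms in $p$ exactly cancel $\partial_t W^\epsilon_{x_0}$ because $H_p(0,c)=\omega$ and $H(0,c)=0$. This leaves
\[
\partial_t W^\epsilon_{x_0} + H(\partial_x W^\epsilon_{x_0},W^\epsilon_{x_0}) \;=\; H_u(0,c)\,(u-c) + R_2,
\]
where $R_2=O(|p|^2+(u-c)^2)$ is the quadratic remainder (the $C^3$-smoothness of $H$ makes this uniform on bounded sets). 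The main obstacle, which also contains the main trick, is that both $H_u(0,c)(u-c)$ and $R_2$ vanish where $u=c$, so I need the remainder to be dominated by the leading negative term rather than just $O(\epsilon^2)$. The crucial identity is
\[
\cos^2 f_i = (1-\sin f_i)(1+\sin f_i) \leq 2(1+\sin f_i),
\]
from which $|p|^2 \le 8\pi^2\epsilon\,(u-c)$; combined with the obvious $(u-c)^2\le 2n\epsilon\,(u-c)$, this yields $R_2 \le C\epsilon\,(u-c)$ for some constant $C$ depending only on the $C^2$-norm of $H$ near $(0,c)$. Using (H3) to bound $H_u(0,c)(u-c)\le -\delta(u-c)$, I obtain
\[
\partial_t W^\epsilon_{x_0} + H(\partial_x W^\epsilon_{x_0},W^\epsilon_{x_0}) \;\leq\; (C\epsilon-\delta)\,(u-c) \;\leq\; 0
\]
as soon as $\epsilon_0\le\delta/C$, establishing the classical (hence viscosity) subsolution property.

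For the listed side statements: the bound $W^\epsilon_{x_0}(x,t)\ge c$ is immediate since each $1+\sin f_i\ge 0$. At $t=0$, $W^\epsilon_{x_0}(x,0)=c$ forces $\sin f_i(x_i,0)=-1$ for every $i$, i.e.\ $2\pi x_i - 2\pi x_i^0 - \pi/2 \equiv -\pi/2 \pmod{2\pi}$, which on $\mathbb{T}^n$ has the unique solution $x=x_0$. For the domination $h_{x_0,c}\ge W^\epsilon_{x_0}$, I would set $\varphi(x):=W^\epsilon_{x_0}(x,0)$ and use the comparison principle: since $W^\epsilon_{x_0}(x,t)$ is a subsolution of \eqref{eq:HJe} with initial datum $\varphi$ and $T^-_t\varphi$ is the viscosity solution with the same initial datum, the standard comparison theorem yields $W^\epsilon_{x_0}(x,t)\le T^-_t\varphi(x)$. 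Then Proposition \ref{pr-sg}(5) gives $T^-_t\varphi(x)=\inf_{y\in\T^n}h_{y,\varphi(y)}(x,t)$, and evaluating this infimum at $y=x_0$ (where $\varphi(x_0)=c$) gives $T^-_t\varphi(x)\le h_{x_0,c}(x,t)$. Chaining the two inequalities completes the proof.
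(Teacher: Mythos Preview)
Your proof is correct and follows essentially the same strategy as the paper. The only organizational difference is that the paper first invokes (H3) globally to write $H(p,u)\le H(p,c)-\delta(u-c)$ and then Taylor-expands only in $p$, whereas you expand $H$ in both variables at once and use (H3) only at the base point $(0,c)$; this forces you to also bound the $(u-c)^2$ part of the remainder, which you do correctly via $(u-c)\le 2n\epsilon$. The key trick---that $\cos^2 f_i\le 2(1+\sin f_i)$ lets the quadratic gradient term be absorbed by the linear $-\delta(u-c)$ term---and the final comparison/representation-formula argument for $h_{x_0,c}\ge W^\epsilon_{x_0}$ are identical to the paper's.
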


\begin{proof}
It is clear that $ W^\epsilon _{x_0}(x,t) \geqslant c$. 	For any $\nu:=(\nu_1,\cdots , \nu_n) \in[0,1)^n$, let
		\begin{equation*}\label{eq:h_pp}
			\widehat H^\nu _{p_ip_j}(x,t):=\int_0^1 s\int_0^1 \frac{\partial^2 H}{\partial p_i\partial p_j} \Big (x, \nu_1 s \tau   \cos f_1(x_1,t), \cdots, \nu_n s \tau   \cos f_n(x_n,t)  , c  \Big )  d\tau \ ds.  		
		\end{equation*}
			Let
		$$
		M_0:=\max_{\substack{ (x,t)\in \mathbb{T}^n\times [0,+\infty) \\
				\nu\in[0,1)^n ,i,j\in\{1,\cdots,n \}}} n \cdot \Big|\widehat H^\nu_{p_ip_i}(x,t) \Big |.
		$$ 	
		Take	
		\[
		\epsilon_0: =\min\Big\{\frac{1}{2} \delta  \Big(4\pi^2 M_0 \max_{1\leqslant i\leqslant n} \omega^2_i \Big)^{-1},1\Big\},
		\]
		where $\delta$ is as in (H3).
		
		We assert that 	
		$W^\epsilon _{x_0}(x,t)$ defined in \eqref{eq:w} is a subsolution of \eqref{eq:HJe} with $W^\epsilon_{x_0}(x_0,0)=c$.
		For any $(x,t)\in \mathbb{T}^n\times[0,+\infty)$, by direct computation we have that
		\begin{align*}
			&\, \partial_t W^\epsilon _{x_0} (x,t)+H(\partial_x W^\epsilon _{x_0} (x,t), W^\epsilon _{x_0} (x,t))\\
			\leqslant &\,  \partial_t W^\epsilon _{x_0} (x,t) + H( \partial_x W^\epsilon _{x_0} (x,t) ,c) -\delta (W^\epsilon_{x_0}(x,t) -c ) \\
			\leqslant &\,  \partial_t W^\epsilon _{x_0} (x,t) + H( 0 ,c) + \left\langle\frac{\partial H}{\partial p} (0,c), \partial_x W^\epsilon _{x_0} (x,t)\right\rangle+ M_0  \left\langle  \partial_x W^\epsilon _{x_0} (x,t) , \partial_x W^\epsilon _{x_0} (x,t) \right\rangle\\  &\  -\delta  (W^\epsilon_{x_0}(x,t) -c ) \\
		= &\, - \epsilon \sum_{i=1}^n 2\pi \omega_i  \cos f_i(x_i,t)  + \sum_{i=1}^n \omega_i  2\pi\epsilon  \cos f_i(x_i,t)   + M_0 4\pi^2\epsilon^2  \sum_{i=1}^n    \omega_i^2  \cos^2 f_i(x_i,t) \\  &\ -\delta  \epsilon \sum_{i=1}^n \Big( 1+\sin f_i(x_i,t) \Big)    \\
			= &\,  M_0 4\pi^2\epsilon^2  \sum_{i=1}^n    \omega_i^2 \cos^2 f_i(x_i,t)  -\delta  \epsilon \sum_{i=1}^n \Big( 1+\sin f_i(x_i,t) \Big)  \\
				\leqslant &\, \epsilon \delta \sum_{i=1}^n\Big( \frac{1}{2} \cos^2 f_i(x_i,t) -1 -\sin f_i(x_i,t)   \Big)\\
			=  &\, -\epsilon \delta \frac{1}{2} \sum_{i=1}^n \Big(1 + \sin f_i(x_i,t) \Big)^2 \leqslant 0.
		\end{align*}
		So far, we have proved that $W_{x_0}^\epsilon (x,t)$ is a subsolution of \eqref{eq:HJe}.
		
From the classical comparison principle, one can deduce that
		$T^-_t W^\epsilon _{x_0}(x,0) \geqslant  W^\epsilon _{x_0}(x,t)$, since $T^-_t W^\epsilon _{x_0}(x,0)$ is a viscosity solution of \eqref{eq:HJe} while $W^\epsilon _{x_0}(x,t)$ is a  subsolution of \eqref{eq:HJe}. Thus,   we get that
 \begin{align*}
	h_{x_0,c}(x,t)=h_{x_0,W^\epsilon _{x_0}(x_0,0)}(x,t)\geqslant    T_t^- W^\epsilon _{x_0}(x,0) \geqslant W^\epsilon_{x_0}(x,t)
	 \end{align*}
	 for any $(x,t)\in \T^n\times [0,+\infty)$.
\end{proof}
Denote by $\Pi:\mathbb{R}^n\to \mathbb{T}^n$ the standard universal covering projection, and by 
\begin{align*}
	  \Phi^H_t:  &\,\T^n\to \T^n \\
	   &\, x\mapsto \Pi(x+\omega t)
\end{align*}
the linear flow on $\T^n$.

\begin{lemma} \label{lem:lem-2}
For any $\varphi \in  C(\T^n,\R )$ with $\displaystyle \min_{x\in \T^n} \varphi(x)=c$, 
$$
T_t^- \varphi( \Phi^H_t(x_0))=h_{x_0,c}(\Phi^H_t(x_0),t)=c, \quad \forall t\geqslant 0,\ \forall x_0\in \arg \min_{x\in\T^n} \varphi(x).
$$
\end{lemma}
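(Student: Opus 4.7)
The plan is to prove the two equalities separately, exploiting the chain of inequalities
\[
c \;\leq\; T_t^-\varphi(\Phi^H_t(x_0)) \;\leq\; h_{x_0,\varphi(x_0)}(\Phi^H_t(x_0),t) \;=\; h_{x_0,c}(\Phi^H_t(x_0),t) \;\leq\; c,
\]
the middle inequality coming from the representation formula (Proposition \ref{pr-sg}(5)) evaluated at $y = x_0$, and the outer two needing proof. So the task splits into establishing the upper bound on $h_{x_0,c}(\Phi^H_t(x_0),t)$ and the lower bound on $T_t^-\varphi(\Phi^H_t(x_0))$.

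For the upper bound on the action function, I would exhibit an explicit orbit of \eqref{eq:ode1} realizing it. Since $H(0,c) = 0$ by Remark \ref{1}, the curve
\[
(x(s),p(s),u(s)) \;=\; (x_0 + \omega s,\, 0,\, c), \qquad s\in[0,t],
\]
solves \eqref{eq:ode1}: at $p=0, u=c$ one has $\dot x = \partial_p H(0,c) = \omega$, $\dot p = -\partial_u H(0,c)\cdot 0 = 0$, and $\dot u = \langle\omega,0\rangle - H(0,c) = 0$. Projected to $\T^n$ this gives an element of $S^{\Phi^H_t(x_0),\,t}_{x_0,c}$ with $u(t) = c$, so the minimality property Proposition \ref{pr-af}(3) yields $h_{x_0,c}(\Phi^H_t(x_0),t) \leq c$. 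Combined with Lemma \ref{lem:subsolution-w}, which gives $h_{x_0,c}(\Phi^H_t(x_0),t) \geq W^\epsilon_{x_0}(\Phi^H_t(x_0),t) \geq c$, the middle equality $h_{x_0,c}(\Phi^H_t(x_0),t)=c$ follows.

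For the lower bound on $T_t^-\varphi$, I would use that the constant function $c$ is a classical, hence viscosity, stationary solution of \eqref{eq:HJe} (again since $H(0,c)=0$), so $T_t^- c \equiv c$ for all $t\geq 0$. Since $\varphi\geq c$ everywhere, the monotonicity of the solution semigroup (Proposition \ref{pr-sg}(1)) then gives $T_t^-\varphi \geq T_t^- c = c$ pointwise, and in particular at $\Phi^H_t(x_0)$. Chaining this with the upper bound from the displayed inequality above closes the loop.

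The only subtlety is that Proposition \ref{pr-sg}(1) is stated for strict inequality of the initial data, whereas here we only have $\varphi \geq c$. This is easy to dispatch: for any $\epsilon>0$ one has $c - \epsilon < c \leq \varphi$, so strict monotonicity gives $T_t^-(c-\epsilon) < T_t^-\varphi$, and the $e^{\kappa t}$-expansiveness (Proposition \ref{pr-sg}(3)) shows $T_t^-(c-\epsilon)\to T_t^- c = c$ as $\epsilon\to 0^+$. Alternatively, one can bypass this altogether by running the representation formula in reverse: $T_t^-\varphi(x) = \inf_y h_{y,\varphi(y)}(x,t) \geq \inf_y h_{y,c}(x,t) \geq c$, where the first inequality uses Proposition \ref{pr-af}(1)(i) (extended to non-strict inequalities via the Lipschitz continuity of Proposition \ref{pr-af}(2)) and the second uses Lemma \ref{lem:subsolution-w} uniformly in the base point $y$. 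No step looks delicate; the main conceptual input is simply the identification of the explicit orbit $(x_0+\omega s,0,c)$.
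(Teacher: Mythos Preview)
Your proposal is correct and follows essentially the same route as the paper: both arguments sandwich $T_t^-\varphi(\Phi^H_t(x_0))$ between $c$ (via monotonicity of $T_t^-$ applied to $\varphi\geq c$ and $T_t^- c\equiv c$) and $h_{x_0,c}(\Phi^H_t(x_0),t)$ (via the representation formula at $y=x_0$), and then bound the latter above by $c$ using the explicit orbit $(x_0+\omega s,0,c)$ together with Proposition~\ref{pr-af}(3). The only cosmetic difference is that the paper obtains $h_{x_0,c}(\Phi^H_t(x_0),t)\geq c$ directly from the sandwich, whereas you additionally invoke Lemma~\ref{lem:subsolution-w}; this extra appeal is harmless but unnecessary. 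Your careful handling of the strict-versus-nonstrict monotonicity is a point the paper silently passes over.
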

\begin{proof}
	Since $u_0\equiv c$ is a classical solution of equation \eqref{eq:HJe}, then 
	\begin{equation}\label{eq:pf-lem-2-1}
	T^-_t \varphi( x) \geqslant T^-_t  u_0(x) =u_0(x)\equiv c, \quad \forall x\in \T^n.
	\end{equation}
	On the other side, 
	\begin{equation}\label{eq:pf-lem-2-2}
	T^-_t \varphi(  \Phi^H_t(x_0))= \inf_{y\in \T^n} h_{y, \varphi(y) }  \left( \Phi^H_t(x_0),t\right)  \leqslant h_{x_0, \varphi(x_0) } \left( \Phi^H_t(x_0),t\right).
	\end{equation}
	 In view of \eqref{eq:pf-lem-2-1} and \eqref{eq:pf-lem-2-2}, we have 
	 \begin{equation}\label{eq:pf-lem-2-3}
	 	 h_{x_0, \varphi(x_0) }\left( \Phi^H_t(x_0),t\right) \geqslant c.
	 \end{equation}
	By Proposition \ref{pr-af} (3), we have
	$$
	h_{x_0, \varphi(x_0) }\left( \Phi^H_t(x_0),t\right)= \inf \Big \{u(t):(x(s),p(s),u(s))\in S^{\Phi^H_t(x_0),t}_{x_0, \varphi(x_0)}\Big \}.
	$$
 Since 
	$$
	\Big\{\widetilde \Phi_s^H(x_0,0,c),s\in[0,t ]\Big\} \in  S^{\Phi^H_t(x_0),t}_{x_0, \varphi(x_0)},
	 $$
 then we have 
 $$
 h_{x_0, \varphi(x_0) }\left( \Phi^H_t(x_0),t\right)\leqslant c, \quad \forall t\geqslant 0.
 $$
In view of \eqref{eq:pf-lem-2-2}, \eqref{eq:pf-lem-2-3} and the above inequality,  
\begin{equation*}\label{eq:pf-lem-2-4}
T_t^- \varphi( \Phi^H_t(x_0))\leqslant h_{x_0, \varphi(x_0) }\left( \Phi^H_t(x_0),t\right) =c,
\end{equation*}
which completes the proof.
\end{proof}

\subsection{Translations and linear flow on $\T^n$ }
We first recall some definitions and known results in this part. Definitions \ref{d1} and \ref{d2} and Propositions \ref{Appendix:1} and \ref{Appendix:2} can be found in \cite{KH}. We prove Proposition \ref{Appendix:3} at last.

\begin{definition}\label{d1}
	A topological dynamical system $f:X\to X$ is called \textit{topologically transitive} if there exists a point $x\in X$ such that its orbit $\mathcal{O}_f(x):= \{f^n(x)\}_{n\in \mathbb{Z}}$ is dense in X. 
\end{definition}
\begin{definition}\label{d2}
	A topological dynamical system $f:X\to X$ is called \textit{minimal} if the orbit of every point $x\in X$ is dense in X, or, equivalently, if $f$ has no proper closed invariant sets.
\end{definition}

Let $\gamma=(\gamma_1, \cdots \gamma_n)\in \mathbb{T}^n$. The translation $T_\gamma$ has the form
$$
\mathfrak{T}_\gamma(x_1,\cdots,x_n)=\Pi(x_1+\gamma_1,\cdots,x_n+\gamma_n ) 
$$
\begin{proposition}\label{Appendix:1}
	The translation $\mathfrak{T}_\gamma$  is minimal if and only if the numbers $\gamma_1,\cdots \gamma_n$ and 1 are rationally independent, that is, if $\displaystyle \sum_{i=1}^nk_i\gamma_i$ is not an integer for any collection of integers $k_1,\cdots,k_n$ expect for $k_1=k_2=\cdots=k_n=0$. 
\end{proposition}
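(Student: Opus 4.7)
The plan is to argue the two directions separately, following the classical Kronecker--Weyl strategy via characters of $\T^n$.

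For the ``only if'' direction I would proceed by contrapositive. Suppose the $\gamma_i$ and $1$ are rationally dependent, so there exist integers $k_1,\ldots,k_n,k_{n+1}$ with $(k_1,\ldots,k_n)\neq 0$ and $\sum_{i=1}^n k_i\gamma_i=k_{n+1}$. The function
$$
\chi:\T^n\to\mathbb{C},\qquad \chi(x_1,\ldots,x_n)=\exp\Bigl(2\pi i\sum_{i=1}^n k_ix_i\Bigr)
$$
is well-defined on the torus (because the $k_i$ are integers), continuous, and non-constant, and by construction $\chi\circ\mathfrak{T}_\gamma=\chi$ since $\exp(2\pi i k_{n+1})=1$. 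Any level set of $\chi$ is then a nonempty, closed, proper, $\mathfrak{T}_\gamma$-invariant subset of $\T^n$, contradicting minimality.

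For the ``if'' direction, assuming rational independence, I would first establish that the orbit $\mathcal{O}(0)=\{m\Pi(\gamma):m\in\mathbb{Z}\}$ is dense in $\T^n$, and then upgrade this to minimality. Set $K:=\overline{\mathcal{O}(0)}$; since $\mathcal{O}(0)$ is a subgroup of $\T^n$, its closure $K$ is a closed subgroup. Suppose for contradiction that $K\subsetneq\T^n$. The classification of proper closed subgroups of $\T^n$ (equivalently, Pontryagin duality for the compact abelian group $\T^n$ with dual $\mathbb{Z}^n$) produces some $k=(k_1,\ldots,k_n)\in\mathbb{Z}^n\setminus\{0\}$ with $\exp(2\pi i\langle k,y\rangle)=1$ for all $y\in K$. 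Evaluating at $y=\Pi(\gamma)\in K$ gives $\sum_i k_i\gamma_i\in\mathbb{Z}$, contradicting rational independence. Hence $K=\T^n$. To promote density of $\mathcal{O}(0)$ to minimality, fix any $x\in\T^n$; the translation $R_x(y):=y+x$ is a homeomorphism of $\T^n$ commuting with $\mathfrak{T}_\gamma$ and sending $\mathcal{O}(0)$ bijectively onto $\mathcal{O}(x)$, so $\overline{\mathcal{O}(x)}=R_x(\overline{\mathcal{O}(0)})=R_x(\T^n)=\T^n$.

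The main obstacle is the nontrivial direction, and specifically the appeal to the characterization of proper closed subgroups of $\T^n$ via character annihilators. This is the harmonic-analysis content of the statement. If a self-contained route is preferred, one can replace it with an invariant-measure argument: any $\mathfrak{T}_\gamma$-invariant probability measure $\mu$ on $\T^n$ satisfies
$$
\hat\mu(k)\bigl(1-e^{-2\pi i\langle k,\gamma\rangle}\bigr)=0,\qquad \forall\,k\in\mathbb{Z}^n,
$$
so rational independence forces $\hat\mu(k)=0$ for all $k\neq 0$, i.e.\ $\mu$ is Haar measure, whose full support rules out any proper closed invariant subset. Either route reduces the problem to the same essential fact: the $\mathfrak{T}_\gamma$-invariant characters on $\T^n$ are exactly those $\chi_k$ with $\langle k,\gamma\rangle\in\mathbb{Z}$.
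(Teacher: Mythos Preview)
Your proof is correct; both directions are handled by the standard character-theoretic argument, and the alternative unique-ergodicity route you sketch is also valid. Note, however, that the paper does not supply its own proof of this proposition: it is quoted as a known result from Katok--Hasselblatt \cite{KH}, so there is nothing to compare against beyond observing that your argument is essentially the textbook one.
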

%\subsection{linear flow on the torus and completely integrable systems}
Let us consider the following system of differential equations on $\T^n$ 
$$
\frac{d x_i}{d t}=  \omega_i, \quad i=1,\cdots,n.
$$
Again integration produces a one-parameter group of translations
$$
\mathfrak{T}_\omega^t(x_1,\cdots,x_n)=\Pi(x_1+t \omega_1, \cdots, x_n+t \omega_n ).
$$
\begin{proposition}\label{Appendix:2}
	The flow $\{\mathfrak{T}_\omega^t\}$ is minimal if and only if the numbers $\omega_1,\cdots,\omega_n$ are rationally independent, that is, if $\displaystyle \sum_{i=1}^n  k_i \omega_i \neq 0$ for any integers $k_1, \cdots k_n$ unless $k_1=\cdots =k_n=0$.
\end{proposition}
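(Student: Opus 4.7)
The plan is to handle the two directions separately, producing an explicit flow invariant for the ``only if'' part and using characters on $\T^n$ for the ``if'' part.

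For the necessity direction, I would start from a non-trivial relation $\sum_{i=1}^n k_i \omega_i = 0$ with $k=(k_1,\ldots,k_n)\in\mathbb{Z}^n\setminus\{0\}$ and introduce $\phi:\T^n\to\mathbb{C}$ defined by $\phi(x):=e^{2\pi i\langle k,x\rangle}$. Since
\[
\phi(\mathfrak{T}_\omega^t(x))=e^{2\pi i\langle k,x\rangle}\,e^{2\pi i t\langle k,\omega\rangle}=\phi(x),
\]
$\phi$ is a non-constant continuous first integral of the flow, so any of its level sets is a non-trivial closed invariant subset of $\T^n$ and Definition \ref{d2} rules out minimality.

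For the sufficiency direction, I would fix an arbitrary $x_0\in\T^n$ and argue that its orbit closure $Y$ is all of $\T^n$. Because the flow acts by translations, $Y-x_0$ equals $G:=\overline{\{\Pi(t\omega):t\in\R\}}$, which is a closed subgroup of $\T^n$ as the closure of a one-parameter subgroup. By Pontryagin duality, $G$ is the intersection of the kernels of those characters $\chi_k(x)=e^{2\pi i\langle k,x\rangle}$, $k\in\mathbb{Z}^n$, that are identically $1$ on $G$. The condition $\chi_k|_G\equiv 1$ forces $e^{2\pi i t\langle k,\omega\rangle}=1$ for every $t\in\R$, i.e.\ $\langle k,\omega\rangle=0$, which by rational independence leaves only $k=0$. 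Hence no non-trivial character vanishes on $G$, so $G=\T^n$ and $Y=\T^n$.

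The one mildly delicate step is the character-theoretic description of closed subgroups of $\T^n$; a self-contained alternative is to invoke the continuous form of Weyl's equidistribution theorem. For each non-zero $k\in\mathbb{Z}^n$ a direct integration of the exponential would give $\frac{1}{T}\int_0^T\chi_k(x_0+t\omega)\,dt\to 0$ as $T\to\infty$, since $\langle k,\omega\rangle\neq 0$; density of trigonometric polynomials in $C(\T^n)$ would then upgrade this to time-averages equalling space-averages for every continuous function, yielding unique ergodicity and hence minimality.
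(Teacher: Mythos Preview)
Your argument is correct in both directions: the invariant character $\phi(x)=e^{2\pi i\langle k,x\rangle}$ cleanly kills minimality when a non-trivial integer relation exists, and the Pontryagin-duality description of the orbit closure (or the Weyl equidistribution alternative) gives density when $\omega$ is rationally independent.

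Note, however, that the paper does \emph{not} supply its own proof of this proposition: it is quoted as a classical fact from Katok--Hasselblatt~\cite{KH} (see the sentence preceding Definition~\ref{d1}). So there is no in-paper argument to compare against; you have simply filled in a standard proof where the authors chose to cite. Your character/duality route is essentially the textbook argument one finds in~\cite{KH}, and the equidistribution alternative is a well-known variant that has the advantage of giving unique ergodicity (hence minimality) without invoking the structure theory of closed subgroups of $\T^n$.
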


The following proposition will be useful in the proof of Theorem \ref{thm:1}.
\begin{proposition}\label{Appendix:3}
	Assume $T\notin D$. The translation $\mathfrak{T}_\omega$ is minimal on the orbit of the flow $\mathfrak{T}^t_\omega$, i.e., for any $x_0\in \T^n$,
	$$
	\bigcup_{n\in \mathbb{Z}}\Phi_{nT}^H(x_0)  \text{ is dense in }   \bigcup_{t\in \R} \Phi_{t}^H(x_0).
	$$
\end{proposition}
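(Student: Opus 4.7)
The plan is to translate the density statement, via Pontryagin duality, into an annihilator inclusion that matches the arithmetic definition of $D$ in Remark \ref{re2}. Since $\Phi^H_t(x_0) = \Pi(x_0 + \omega t)$, after shifting by $x_0$ it suffices to show that $D_0 := \{\Pi(n\omega T) : n \in \mathbb{Z}\}$ is dense in $C_0 := \{\Pi(\omega t) : t \in \R\}$. Setting $\mathcal{T} := \overline{C_0}$, which is a closed subgroup of $\T^n$, one automatically has $\overline{D_0} \subseteq \mathcal{T}$ from $D_0 \subseteq C_0$. The goal therefore reduces to proving $\overline{D_0} = \mathcal{T}$, from which $\overline{D_0} \supseteq C_0$ follows.

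I would carry out the reverse containment by comparing annihilators in the character group $\widehat{\T^n} = \mathbb{Z}^n$, where the character attached to $k \in \mathbb{Z}^n$ is $\chi_k(x) = e^{2\pi i \langle k, x\rangle}$. A direct computation gives
\begin{align*}
\mathcal{T}^\perp &= \{k \in \mathbb{Z}^n : \langle k, \omega\rangle\, t \in \mathbb{Z} \text{ for all } t \in \R\} = \{k \in \mathbb{Z}^n : \langle k, \omega\rangle = 0\},\\
\overline{D_0}^\perp &= \{k \in \mathbb{Z}^n : \langle k, \omega\rangle T \in \mathbb{Z}\}.
\end{align*}
By Pontryagin duality on $\T^n$, the containment $\overline{D_0} \supseteq \mathcal{T}$ is equivalent to the reverse containment $\overline{D_0}^\perp \subseteq \mathcal{T}^\perp$.

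To close the argument I would invoke the hypothesis $T \notin D$. Suppose $k \in \overline{D_0}^\perp$ with $\langle k, \omega\rangle \neq 0$; then $\langle k, \omega\rangle T$ is a nonzero integer (using $T > 0$), so the characterization in Remark \ref{re2} would place $T \in D$, contradicting the hypothesis. Hence $\langle k, \omega\rangle = 0$, i.e., $k \in \mathcal{T}^\perp$, which completes the proof. The only step I expect to be delicate is the invocation of Pontryagin duality; a self-contained alternative, staying within the toolkit of Section 2.3, is to choose a unimodular change of basis on $\T^n$ identifying $\mathcal{T}$ with a standard torus $\T^m$ ($m = \dim \mathcal{T}$) on which the linear flow has rationally independent direction vector (minimal by Proposition \ref{Appendix:2}), to translate $T \notin D$ into rational independence of $1$ together with the components of the image of $\omega T$, and then to apply Proposition \ref{Appendix:1} directly. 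Either route ultimately rests on the same arithmetic input, namely the one-line reformulation of $T \notin D$.
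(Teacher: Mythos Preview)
Your argument is correct, and it differs from the paper's route. The paper works entirely by hand: it selects (after reordering) a maximal rationally independent subset $\omega_1,\dots,\omega_m$ of the frequencies, argues from $T\notin D$ that $\omega_1T,\dots,\omega_mT,1$ are rationally independent, applies Proposition~\ref{Appendix:1} to get minimality of the time-$T$ map on $\T^m$ and Proposition~\ref{Appendix:2} to get minimality of the flow on $\T^m$, and then lifts density from the first $m$ coordinates to the remaining ones by writing each $\omega_j$ ($j>m$) as a rational combination of $\omega_1,\dots,\omega_m$ and estimating distances directly. Your Pontryagin-duality computation bypasses all of this bookkeeping: the two annihilators $\mathcal{T}^\perp=\{k:\langle k,\omega\rangle=0\}$ and $\overline{D_0}^{\,\perp}=\{k:\langle k,\omega\rangle T\in\mathbb{Z}\}$ are written down in one line, and the inclusion $\overline{D_0}^{\,\perp}\subseteq\mathcal{T}^\perp$ is exactly the statement $T\notin D$ from Remark~\ref{re2}. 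This is shorter and more conceptual; the paper's approach has the advantage of being self-contained within the elementary toolkit of Section~2.3, not invoking the closed-subgroup/annihilator correspondence. The alternative you sketch at the end (unimodular change of basis to a standard subtorus, then Propositions~\ref{Appendix:1}--\ref{Appendix:2}) is essentially the paper's strategy, organized a bit more cleanly.
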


	\begin{proof}
 We only need to focus on the case $\omega\neq 0$. 
		Assume $\omega_1,\cdots \omega_m, 1\leqslant m<n$ is a maximum linearly independent group, then $\omega_1,\cdots \omega_m $ are rationally independent. Due to $ T\notin D$,  similarly with \eqref{eq:pf-thm1-step5--claim-0}, we have that $\omega_1T,\cdots, \omega_mT $ and $1$ are rationally independent.
		
		Applying Proposition \ref{Appendix:1}, the translation 
		\begin{align*}
			\mathfrak{T}_\omega|_{\T^m}: \quad\quad\quad \T^m \quad \quad \to &\, \quad\quad\T^m \\
		  (x^0_1,\cdots,x^0_m)\mapsto &\, \Pi(x^0_1+\omega_1,\cdots,x^0_m+\omega_n) 
		\end{align*}
		 is  minimal on $\T^m$. Applying Proposition \ref{Appendix:2}, the flow 
		\begin{align*}
			\mathfrak{T}^t_\omega|_{\T^m}: \quad\quad\quad \T^m \quad \quad \to &\, \quad\quad\T^m \\
		  (x^0_1,\cdots,x^0_m)\mapsto &\, \Pi(x^0_1+t\omega_1,\cdots,x^0_m+t\omega_n) 
		\end{align*}
		is minimal on $\T^m$. It follows that the translation $\mathfrak{T}_\omega|_{\T^m}$ is minimal on the orbit of flow $\mathfrak{T}^t_\omega|_{\T^m}$.
		
		For any $j=m+1,m+2,\cdots, n$, there exist $\{a_{jk}\}_{k=1}^m$ such that
		$$
		\omega_j= a_{j1} \omega_1+a_{j2} \omega_2+\cdots +a_{jm} \omega_m .
		$$
		This implies that for any $x\in \mathfrak{T}^t_\omega(x_0)$ and $\epsilon>0$, $ x_i=x^0_i+ t \omega_i$,  there exists  $N>0$ such that 
		$$
		\sum_{i=1}^m \min \Big\{ | x_i-  \{ x^0_i+ N\omega_i  \}  |, 1- | x_i-  \{ x^0_i+ N\omega_i  \}  | \Big\}\leqslant \delta:= \Big( (n-m)\cdot \max_{\substack{ 1\leqslant k\leqslant m \\ m+1\leqslant j\leqslant n }}|a_{jk} | +1\Big)^{-1}  \epsilon
		$$ 
		and
		\begin{align*}
			&\, \min \Big\{ | x_j-  \{ x^0_j+ N\omega_j  \}  |, 1- | x_j-  \{ x^0_j+ N\omega_i  \}  | \Big\} \\
			\leqslant &\,  \max_{1\leqslant k\leqslant m}|a_{jk} | \cdot  \sum_{i=1}^m \min \Big\{ | x_i-  \{ x^0_i+ N\omega_i  \}  |, 1- | x_i-  \{ x^0_i+ N\omega_i  \}  | \Big\} \\
			\leqslant &\,  \max_{1\leqslant k\leqslant m}|a_{jk} | \cdot \delta.
		\end{align*}
		Hence, we get that
		\begin{align*}
			 \sum_{i=1}^n \min \Big\{ | x_i-  \{ x^0_i+ N\omega_i  \}  |, 1- | x_i-  \{ x^0_i+ N\omega_i  \}  | \Big\} 
			\leqslant   \Big( (n-m)\cdot \max_{\substack{ 1\leqslant k\leqslant m \\ m+1\leqslant j\leqslant n }}|a_{jk} | +1\Big) \cdot \delta \leqslant \epsilon.
		\end{align*}
		This implies that the translation $\mathfrak{T}_\omega$ is   minimal on the orbit of flow $\mathfrak{T}^t_\omega|_{\T^n}$. 
	\end{proof}

\section{Proof of Theorem \ref{thm:1}}

\begin{proof}[Proof of Theorem \ref{thm:1}]
Since the proof is quite long, we divide it into five steps.
Steps 1-4 are devoted to the proof of the fact that for any $T\in D$ there is a non-trivial $T$-periodic viscosity solution of \eqref{eq:HJe}. In the last step we show that if equation \eqref{eq:HJe} has a $T$-periodic viscosity solution then $T$ must belong to the set $D$.

First, we show that for any $T\in D$ there is a non-trivial $T$-periodic viscosity solution of \eqref{eq:HJe}.
For any $T\in D$, there exists $( k_1,\cdots, k_{n+1})\in \mathbb{Z}^{n+1} $ such that 
	\begin{equation}\label{eq:pf-thm1-T-1}
		(k_1\omega_1+ \cdots +k_n \omega_n )T=k_{n+1} \neq 0.
	\end{equation}
Without any loss of generality, we assume $k_{n+1}=1$.
If  $k_{n+1}\neq 1$, notice that
$$
(k_1\omega_1+ \cdots +k_n \omega_n )\cdot k_{n+1}^{-1}T=1 \neq 0.
$$
A non-trivial $k_{n+1}^{-1}T$-periodic solution also is a non-trivial $T$-periodic solution.  So we only need to consider the case $k_{n+1}=1$.

\medskip

\noindent {\bf Step 1:}	We will show that for any $T\in D$, there exists a closed subset $S\subset \T^n$ satisfying
\begin{align}\label{eq:eq:pf-thm1-step1-1}
\Phi^H_T (S)=S, \quad \text{and} \quad \Phi^H_t (S) \cap S=\emptyset, \quad \forall \, t\in  (0,T).
\end{align}
Let $l:=\{x\in \R^n :  k_1x_1+ \cdots +k_n x_n \in \mathbb{Z} \} \subset \mathbb{R}^n$. Set $ S=\Pi(l)$.   
First, for any $x\in S$,  we have $k_1x_1+ \cdots +k_n x_n \in \mathbb{Z}$ and
\begin{align*}
	k_1(x_1\pm \omega_1 T)+ k_2(x_2 \pm \omega_2 T)+\cdots +k_n(x_n \pm\omega_n T)=k_1x_1+ \cdots +k_n x_n\pm1\in \mathbb{Z}.
\end{align*}
This implies that
\begin{align*}
	\Phi_T^H (x)= \Pi( x+ \omega T) \in S, \quad 	x=\Phi_T^H (\Pi(x-\omega T) )\in \Phi_T^H (S).
\end{align*}
Thus,  $\Phi^H_T (S)=S$.   Next, for any $t\in (0,T)$,  we get that
\begin{align*}
k_1(x_1+ \omega_1 t)+k_2(x_2+ \omega_2 t)+\cdots +k_n(x_n+ \omega_n t)=k_1x_1+k_2x_2+\cdots +k_n x_n+ \frac{t}{T} \notin \mathbb{Z},
\end{align*}
which means  $\Phi^H_t(S) \cap S=\emptyset$ for any $t\in (0,T)$. We have proved \eqref{eq:eq:pf-thm1-step1-1}.

\medskip

\noindent {\bf Step 2:} Define 
$$
U_k(x,t):=\inf_{y\in S}h_{y,c}(x,kT+t), \quad \forall (x,t)\in \mathbb{T}^n \times [0,+\infty).
$$
We will show that for any $T\in D$, $U_k(x,t)$ is decreasing  with respect to $k\in \mathbb{N}$ for any $(x,t)$ and $\{U_k(x,t)\}_k$ are uniformly bounded on $\T^n \times [0, +\infty)$.

First, we prove that $U_k(x,t)$ is decreasing with respect to $k\in \mathbb{N}$. For any given $x_0\in S$, since $x(t):=\Pi( x_0+\omega t)= \Phi^H_t (x_0) , u(t)\equiv c,t\in \R$ is globally minimizing, then for each $t_1<t_2\in\R$, there holds 
$$
u(x(t_2))=h_{x(t_1),u(x(t_1))}(x(t_2),t_2-t_1).
$$  
And thus, we get that
\begin{equation}\label{eq:pf-step2-0}
	 c=h_{x_0,c}(x(t),t)=h_{x_0,c}(\Phi^H_t (x_0) ,t),\quad \forall \, t>0.
\end{equation}
Note that for any $x_0\in S$, for any $(x,t)\in \mathbb{T}^n \times [0,+\infty)$,  
\begin{align*}
		h_{x_0,c}(x,(k+1)T+t) =&\, \inf_{z\in \mathbb{T}^n} h_{z,h_{x_0,c}(z,T)}(x,kT+t ) \\
		 \leqslant &\, h_{x(T),h_{x_0,c}(x(T),T)}(x,kT  +t)\\ =&\, h_{x(T),c }(x,kT+t).
\end{align*}
 
Since $x_0+ \omega T\in l$, then $x(T)=\Pi(x_0+ \omega T )\in S$. In view of \eqref{eq:eq:pf-thm1-step1-1}, it  follows that for any $(x,t)\in \mathbb{T}^n \times [0,+\infty)$,
	\begin{equation}\label{eq:pf-step2-1}
	\begin{split}
			U_{k+1}(x,t)=&\, \inf_{y\in S}h_{y,c}(x,(k+1)T+t) \\
			 \leqslant &\, \inf_{y\in S}  h_{\Pi(y+\omega T) ,c}(x,kT+t)\\
			   = &\,\inf_{y\in \Phi^H_{T}(S)}  h_{y,c}(x,kT+t)=U_{k}(x,t).
	\end{split}	
	\end{equation}

	Next, we prove that $U_k(x,t)$ is uniformly bounded. 	 Note that $u_0\equiv c$ is a solution of $H(\partial_x u,c)=0$. Thus
	\begin{equation}\label{eq:pf-thm1-step2-2}
	c=u_0(x)=T_t^- u_0(x)=\inf_{y\in \T^n} h_{y,u_0(y)}(x,t)= \inf_{y\in \T^n} h_{y,c}(x,t), \quad \forall x\in \T^n,t>0.
	\end{equation}
	Thus for any $(x,t)\in \T^n\times [0, +\infty)$, 
	\begin{align*}
		U_k(x,t)=\inf_{y\in S}h_{y,c}(x,kT+t) \geqslant c, \quad k\in \mathbb{N}.
	\end{align*}
	In view of \eqref{eq:pf-step2-1},  $U_k(x,t)$ is uniformly bounded.
	
\medskip

\noindent {\bf Step 3:} We show that for any $T\in D$,  $u(x,t):=\displaystyle \lim_{k\to +\infty } U_k(x,t)$ is   a  $T$-periodic viscosity solution of equation \eqref{eq:HJe}.

	By step 2, for any $(x,t)\in\T^n\times[0,+\infty)$, the following limit  exists 
	\begin{equation}\label{eq:eq:pf-thm1-step3-def-u}
	u(x,t):=\lim_{k\to +\infty } U_k(x,t)= \lim_{k\to +\infty }\inf_{y\in S}h_{y,c}(x,kT+t).
	\end{equation}
 Next, we claim that for any $t>0$,
\begin{equation}\label{eq:pf-thm1-step3-0}
		 	\lim_{k\to +\infty } U_k(x,t)= u(x,t), \quad \text{uniformly on } \T^n. 
		 \end{equation}
		 In fact, by Lemma \ref{lem:Tt-+inf} for any $t\geqslant 0$
		  \begin{equation}\label{eq:pf-thm1-step3-1}
	\begin{split}
		 U_k(x,t)=\inf_{y\in S}h_{y,c}(x,kT+t)= \inf_{y\in S} T_t^- h_{y,c}(x,kT) 
		 =T_t^-  \inf_{y\in S}  h_{y,c}(x,kT) =T_t^-  U_{k}(x,0),
	\end{split}
		 \end{equation}
		 and
  \begin{equation}\label{eq:pf-thm1-step3-2}
	\begin{split}
		 	 U_k(x,t)=&\, \inf_{y\in S}h_{y,c}(x,kT+t)\\
		 	 =&\, \inf_{y\in S} T_T^-h_{y,c}(x,(k-1)T+t)\\
		 	=&\,T_T^- \inf_{y\in S}  h_{y,c}(x,(k-1)T+t)\\ =&\, T_T^- U_{k-1}(x,t).
	\end{split}
		 \end{equation}
		  We show that the equi-Lipschitz property of $\{U_k(x,t) \}_{k>2}$ with respect to $x$ on  $\T^n$ for any fixed $t\in [0,+\infty)$. Denote by $K_2(t)>0$ a constant such that 
		 $\| U_k(\cdot ,t)  \|_\infty \leqslant K_2$ for all $k>1$. For  any $x,y\in \T^n$, by \eqref{eq:pf-thm1-step3-2}  we get that 
		 \begin{align*}
   |U_k(x,t)-U_k(y,t)|  =& | T_T^- U_{k-1}(x,t) - T_T^- U_{k-1}(y,t) | \\
    = & | \inf_{z \in M} h_{z,U_{k-1} (z,t)} (x,T) -  \inf_{z \in M} h_{z,U_{k-1} (z,t)} (y,T) | \\
     \leqslant &  \sup _{z \in M} | h_{z,U_{k-1} (z,t)} (x,T) - h_{z,U_{k-1} (z,t)} (y,T)|.
\end{align*}
Since $h_{.,.}(.,T)$ is uniformly Lipschitz on $\T^n \times [-K_2,K_2] \times \T^n$ with the Lipschitz constant $c_0 >0$, then we get that
$$
 |U_k(x,t)-U_k(y,t)|  \leqslant c_0 \ \|x-y\|, \quad \forall \, k >2.
$$
This implies \eqref{eq:pf-thm1-step3-0} holds true.

For any $x\in \mathbb{T}^n$, any $t\in [0,+\infty)$, by \eqref{eq:pf-thm1-step3-1} 	we obtain that
	$$
	u(x,t)=\lim_{k\to +\infty } U_k(x,t)= \lim_{k\to +\infty } T_t^-U_{k}(x,0)=T_t^- \lim_{k\to +\infty } U_{k}(x,0)=T_t^- u(x,0).
	$$
For any $x\in \mathbb{T}^n$, any  $t\in [0,+\infty)$,
	\begin{align*}
		T_T^- u(x,t)=&\, T_T^- \lim_{k\to +\infty }\inf_{y\in S}h_{y,c}(x,kT+t) \\
		=&\,  \lim_{k\to +\infty }T_T^-\inf_{y\in S}h_{y,c}(x,kT+t) \\
		=&\,  \lim_{k\to +\infty }\inf_{y\in S}h_{y,c}(x,(k+1)T+t)\\
		=&\, u(x,t).
	\end{align*}
	Hence $u(x,t)$ is  a  $T$-periodic viscosity solution of \eqref{eq:HJe}.

\medskip

\noindent {\bf Step 4:} We show that for any $T\in D$,  $u(x,t)$ is a  non-trivial $T$-periodic viscosity solution of equation \eqref{eq:HJe}.

	For any given $x_0\in S$,  by the  Lemma \ref{lem:subsolution-w}, for any $k\in \mathbb{N},y\in \T^n$, we have
	$$
	h_{y,c}(x_0,kT+\frac{T}{2}) \geqslant W^\epsilon_{y}(x_0,kT+\frac{T}{2}). 
	$$	
 In view of \eqref{eq:eq:pf-thm1-step1-1}, for any $t \geqslant 0$, we have 
	\begin{equation}\label{eq:pf-thm1-step4-1}
	\begin{split}
		\displaystyle \inf_{y\in S} W^\epsilon_{y}(x, t+T) = &\,	 c+\displaystyle \inf_{y\in S} \epsilon \sum_{i=1}^n \Big( 1+\sin \Big( 2\pi x_i- 2\pi y_i-\frac{\pi}{2} -2\pi  \omega_i (t+T) \Big)  \Big) \\
		=&\, c+\displaystyle \inf_{y\in S} \epsilon \sum_{i=1}^n \Big( 1+\sin  \Big( 2\pi x_i -2\pi (y_i+\omega_i T) -\frac{\pi}{2} -2\pi  \omega_i t \Big) \Big)\\
		=&\, c+\displaystyle \inf_{y\in S} \epsilon \sum_{i=1}^n \Big( 1+\sin  \Big( 2\pi x_i -2\pi\cdot \Pi(y_i+\omega_i T )-\frac{\pi}{2} -2\pi  \omega_i t \Big) \Big) \\
		=&\, c+\displaystyle \inf_{z  \in \Phi_{T}^H (S)} \epsilon \sum_{i=1}^n \Big( 1+\sin  \Big( 2\pi x_i -2\pi z_i -\frac{\pi}{2} -2\pi  \omega_i t \Big) \Big) \\
		=&\, c+\displaystyle \inf_{z  \in   S} \epsilon \sum_{i=1}^n \Big( 1+\sin  \Big( 2\pi x_i -2\pi z_i -\frac{\pi}{2} -2\pi  \omega_i t \Big) \Big)\\
		=&\, 	\displaystyle \inf_{y\in S} W^\epsilon_{y}(x, t) ,
	\end{split}
	\end{equation}
	showing that $\displaystyle \inf_{y\in S} W^\epsilon_{y}(x,\cdot )$ is $T$-periodic.  
 Thus,  
	\begin{align*}
		\lim_{k\to +\infty} \inf_{y\in S} h_{y,c}(x_0,kT+\frac{T}{2})\geqslant  \lim_{k\to +\infty} \inf_{y\in S} W^\epsilon_{y}(x_0,kT+\frac{T}{2})  
		=  \inf_{y\in S} W^\epsilon_{y}(x_0, \frac{T}{2}).
		\end{align*}

	We claim  that
	\begin{align}\label{1+1}
	 \inf_{y\in S} W^\epsilon_{y}(x_0, \frac{T}{2})>c.
	\end{align}
	Assume by contradiction that $ \displaystyle  \inf_{y\in S} W^\epsilon_{y}(x_0, \frac{T}{2})=c$. In view of \eqref{eq:eq:pf-thm1-step1-1},  we get
	\begin{align*}
		c= \inf_{y\in S} W^\epsilon_{y}(x_0, \frac{T}{2})=&\, c+\displaystyle \inf_{y\in S} \epsilon \sum_{i=1}^n \Big) 1+\sin \Big( 2\pi x^0_i- 2\pi y_i-\frac{\pi}{2} -2\pi  \omega_i \frac{T}{2}  \Big)  \Big)  \\
		 =&\, c+\displaystyle \inf_{y\in S} \epsilon \sum_{i=1}^n \Big( 1+\sin \Big( 2\pi x^0_i- 2\pi \big( y_i +   \omega_i \frac{T}{2} \big)  -\frac{\pi}{2}  \Big)  \Big) \\
		 =&\,c+\displaystyle \inf_{y\in S} \epsilon \sum_{i=1}^n \Big( 1+\sin \Big( 2\pi x^0_i- 2\pi \cdot \Pi \big( y_i +   \omega_i \frac{T}{2} \big)  -\frac{\pi}{2}  \Big)  \Big) \\
		 =&\, c+\displaystyle \inf_{z \in \Phi^H_{T/2} (S)} \epsilon \sum_{i=1}^n \Big( 1+\sin \Big( 2\pi x^0_i- 2\pi  z_i  -\frac{\pi}{2}  \Big)  \Big).
	\end{align*}
	This implies that 
	$$
	x_0\in    \Phi^H_{T/2} (S),
	$$
	and thus 
	$$
	x_0=  \Phi^H_{T/2} (S) \cap S\neq \emptyset,
	$$
	in contradiction to \eqref{eq:eq:pf-thm1-step1-1}. 
 So \eqref{1+1} holds true.

	On the other side, by Lemma  \ref{lem:lem-2}, we have
	$$
	\liminf_{t\to +\infty} \inf_{y\in S} h_{y,c}(x_0,t) \leqslant  \liminf_{t\to +\infty} h_{\Pi( x_0-\omega t ),c}(x_0,t) = c.
	$$  
	By \eqref{eq:pf-thm1-step2-2}, we get
	$$
	\liminf_{t\to +\infty} \inf_{y\in S} h_{y,c}(x_0,t) \geqslant \liminf_{t\to +\infty}  \inf_{y\in \T^n} h_{y,c}(x,t)  =c.
	$$
It follows that for any $x_0\in S$ 
		\begin{align*}
			\limsup_{t\to +\infty} \inf_{y\in S} h_{y,c}(x_0,t) \geqslant 	\lim_{k\to +\infty} \inf_{y\in S} h_{y,c}(x_0,kT+\frac{T}{2}) > c= \liminf_{t\to +\infty} \inf_{y\in S} h_{y,c}(x_0,t).
		\end{align*}
Therefore, $u(x,t)$ is  a non-trivial $T$-periodic viscosity  solution of \eqref{eq:HJe}.

\medskip

In the last step we show that if equation \eqref{eq:HJe} has a $T$-periodic viscosity solution then $T$ must belong to the set $D$. Denote by $w(x,t)$ a non-trivial $T$-periodic viscosity solution of equation \eqref{eq:HJe}.

\medskip

\noindent {\bf Step 5:}  Assume by contradiction that  $ T\notin D$.   Then for any  $( k_1,\cdots, k_n)\in \mathbb{Q}^n$, 
$$
(k_1\omega_1+ \cdots +k_n \omega_n )T\neq1.
$$

There are essentially two cases.

\medskip

\noindent (1) $\omega$ is rationally independent. We claim that 
\begin{equation}\label{eq:pf-thm1-step5--claim-0}
\text{
	$\omega_1T, \cdots ,\omega_n T $ and 1 are rationally independent}.
\end{equation}
Assume by contradiction  that there exists $k_1,\cdots k_{n+1}\in \mathbb{Z}^{n+1} \backslash \{0 \} $    such that 
\begin{equation}\label{eq:pf-thm1-step5-1}
		k_{n+1}=\sum_{i=1}^n k_i \omega_i T =  \Big(\sum_{i=1}^n k_i \omega_i \Big) T.
\end{equation}
Since  $\omega_1, \cdots ,\omega_n$ are rationally independent, then 
$\displaystyle \sum_{i=1}^n k_i \omega_i \neq 0$ and $k_{n+1}\neq 0$. Hence, we have
$$
T= \frac{k_{n+1}}{ \displaystyle \sum_{i=1}^n k_i \omega_i } = \Big( \sum_{i=1}^n \frac{k_i}{k_{n+1}} \omega_i \Big)^{-1}  \in D,
$$
which contradicts $T\notin D$. Thus \eqref{eq:pf-thm1-step5--claim-0} holds true.

By Proposition  \ref{Appendix:1} and  \eqref{eq:pf-thm1-step5--claim-0} one can deduce that the translation $\mathfrak{T}_\omega$  is minimal. 
By Proposition \ref{prop:WWY-longtime-JDE}, we know that $\{ x\in \T^n : w(x,0)=c\} \neq \emptyset $. Take any
$$
x_0\in \{ x\in \T^n : w(x,0)=c\}.
$$
Then by Lemma \ref{lem:lem-2}, for any $t>0$,
$$
w\Big(\Phi_t^H(x_0) , t\Big)= T_t^- w\Big(\Phi_t^H(x_0) ,0\Big)=c.
$$ 
In particular, for any $k\in \mathbb{Z}$,
\begin{equation}\label{eq:pf-thm2-(1)-1}
w\Big(\Phi_{kT}^H(x_0), 0\Big)=w\Big(\Phi_{kT}^H(x_0), kT\Big) = T_{kT}^- w\Big(\Phi_{kT}^H(x_0),0\Big)=c.
\end{equation} 
This implies that $w(x,0)=c$ for any $x\in \mathcal{O}_{\mathfrak{T}_\omega}(x)$. Since $\mathfrak{T}_\omega$  is minimal, we get
$$
w(x,0)=c, \quad a.e. \ x\in \T^n. 
$$
By the continuity of  $w(x,0)$, one follows 
$$
w(x,t)= T_t^- w(x,0)= T_t^- u_0 \equiv c, \quad \forall (x,t)\in \T^n\times[0,+\infty) .
$$ 
This contradicts that  $u(x,t)$ is non-trivial $T$-periodic solution of equation \eqref{eq:HJe}.

\medskip

\noindent (2) $\omega$ is rationally dependent.
%Assume $\{\omega_1,\cdots \omega_m\}, 1\leqslant m<n$, is a maximum linearly independent group. Then $\omega_1,\cdots \omega_m $ are rationally independent. Due to $ T\notin D$,  similarly with \eqref{eq:pf-thm1-step5--claim-0}, we have that $\omega_1T,\cdots, \omega_mT $ and $1$ are rationally independent.
% 	Then by Proposition  \ref{Appendix:1}, the translation $\mathfrak{T}_\omega$  is minimal on $\T^m$.
Due to $ T\notin D$, in view of Proposition \ref{Appendix:3},  we deduce that for any $x_0\in \T^n$,
 	$$
 	 \bigcup_{n\in \mathbb{Z}}\Phi_{nT}^H(x_0)  \text{ is dense in }   \bigcup_{t\in \R} \Phi_{t}^H(x_0).
 	$$
 	 Let $x_0$ be such that  $w(x_0,0)=c$. Then similarly with \eqref{eq:pf-thm2-(1)-1}, we obtain that
 	$$
 	w\Big(\Phi_{nT}^H(x_0),0\Big)=c =w\Big(\Phi_{t}^H(x_0),0\Big), \quad \forall \, n \in \mathbb{N},\ \forall t>0.
 	$$
 	It follows that 
 	\begin{align}\label{0-0}
 	I_w := \{ x\in \T^n:  w(x,0)=c \}
 	\end{align}
 is $\Phi^H_t$-invariant.

 Applying Lemma \ref{lem:long time} below and  taking $\varphi=w(x,0)$, we have that 
 $$
 \liminf_{t\to +\infty }   w(x,t) =\limsup_{t\to +\infty }   w(x,t)= \lim_{t\to +\infty} \inf_{ y\in  \mathfrak{S}} h_{y,c}(x,t).
  $$
  This contradicts the assumption that $w(x,t)$ is a non-trivial $T$-periodic solution of equation \eqref{eq:HJe}.
  \end{proof}

 \begin{lemma} \label{lem:long time}
 	For any $\varphi \in C(\T^n,\R)$ satisfying  $\displaystyle \min_{x\in \T^n } \varphi = c$ and
 	$$
 		I_\varphi:=  \{ x\in \T^n:  \varphi(x)=c \}, \quad \Phi_t^H (I_\varphi)  = I_\varphi, \quad \forall t\geqslant 0,
 	$$ 
 there holds
 	$$
 	\lim_{t\to +\infty } T_t^- \varphi= \lim_{t\to +\infty} \inf_{ y\in  I_\varphi } h_{y,c}(x,t)\in \mathcal{S}^-.
 	$$
 %	where $\mathfrak{S}$ is as in \eqref{0-0}.
 \end{lemma}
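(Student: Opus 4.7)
The plan is to compare $T_t^-\varphi$ with $F(x,t):=\inf_{y\in I_\varphi}h_{y,c}(x,t)$, show that $G(x):=\lim_{t\to+\infty}F(x,t)$ exists, and identify $\lim_{t\to+\infty}T_t^-\varphi$ with $G$. I begin by noting that the invariance $\Phi_t^H(I_\varphi)=I_\varphi$ lets me write any $x\in I_\varphi$ as $x=\Phi_t^H(x_0)$ with $x_0=\Phi_{-t}^H(x)\in I_\varphi\subset\arg\min\varphi$, so Lemma~\ref{lem:lem-2} yields $T_t^-\varphi(x)=c$ for every $t\geqslant 0$ and every $x\in I_\varphi$. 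Next, the Markov property (Proposition~\ref{pr-af}(4)) combined with the static-orbit identity $h_{y,c}(\Phi_s^H(y),s)=c$ for $y\in I_\varphi$ (cf.~\eqref{eq:pf-step2-0}) gives $h_{y,c}(x,t+s)\leqslant h_{\Phi_s^H(y),c}(x,t)$; taking the infimum over $y\in I_\varphi$ and using $\Phi_s^H(I_\varphi)=I_\varphi$ produces $F(x,t+s)\leqslant F(x,t)$, exactly as in Step~2 of the proof of Theorem~\ref{thm:1}. Together with $F(x,t)\geqslant c$ (which follows from $\inf_{y\in\T^n}h_{y,c}(x,t)=c$), the limit $G(x):=\lim_{t\to+\infty}F(x,t)$ exists pointwise, and the equi-Lipschitz continuity of $\{F(\cdot,t)\}_{t\geqslant 1}$ inherited from Proposition~\ref{pr-af}(2) upgrades this to uniform convergence on $\T^n$.

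For the upper bound, the representation formula (Proposition~\ref{pr-sg}(5)) and $\varphi|_{I_\varphi}=c$ immediately give
$$T_t^-\varphi(x)=\inf_{y\in\T^n}h_{y,\varphi(y)}(x,t)\leqslant\inf_{y\in I_\varphi}h_{y,c}(x,t)=F(x,t),$$
whence $\limsup_{t\to+\infty}T_t^-\varphi(x)\leqslant G(x)$.

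The main obstacle is the matching lower bound $\liminf_{t\to+\infty}T_t^-\varphi(x)\geqslant G(x)$. My strategy is to show that any (near-)minimizer $y_t^*\in\T^n$ of $T_t^-\varphi(x)=\inf_y h_{y,\varphi(y)}(x,t)$ satisfies $\mathrm{dist}(y_t^*,I_\varphi)\to 0$ as $t\to+\infty$. Indeed, Proposition~\ref{prop:WWY-longtime-JDE}(D1) gives a uniform bound $|T_t^-\varphi(x)|\leqslant K$ for large $t$, while hypothesis (H3) makes the equilibrium $u\equiv c$ of the ODE $\dot u=-H(0,u)$ exponentially unstable; arguing with the characteristic system \eqref{eq:ode1} along a minimizer of $h_{y,\varphi(y)}(x,t)$ then forbids a bounded value at time $t$ unless the starting height $\varphi(y_t^*)$ is close to $c$, and by continuity of $\varphi$ this forces $\mathrm{dist}(y_t^*,I_\varphi)\to 0$. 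Once this is in place, the Lipschitz continuity of $(y,u)\mapsto h_{y,u}(x,t)$ on bounded sets (Proposition~\ref{pr-af}(2)) together with continuity of $\varphi$ yields $T_t^-\varphi(x)\geqslant F(x,t)-o(1)$, giving $\liminf_{t\to+\infty}T_t^-\varphi(x)\geqslant G(x)$.

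Combining the last two paragraphs, $\lim_{t\to+\infty}T_t^-\varphi=G$ holds pointwise on $\T^n$; boundedness of $T_t^-\varphi$ then allows Proposition~\ref{prop:liminf} to identify $G=\liminf_{t\to+\infty}T_t^-\varphi$ as a viscosity solution of \eqref{wkam}, so $G\in\mathcal{S}^-$. The hardest step is clearly the lower bound: pinning down, by means of (H3), the rate at which minimizing curves with initial height strictly above $c$ are repelled out of the bounded regime is what makes the concentration of near-minimizers near $I_\varphi$ rigorous.
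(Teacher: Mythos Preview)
Your overall architecture coincides with the paper's: monotone decrease of $F(x,t)=\inf_{y\in I_\varphi}h_{y,c}(x,t)$, the easy upper bound $T_t^-\varphi\leqslant F$, and then a lower bound obtained by showing that the infimum defining $T_t^-\varphi$ is effectively supported in an $\epsilon$-neighborhood $O_\epsilon$ of $I_\varphi$ for large $t$. Your concentration argument via the instability of $u\equiv c$ under (H3) is morally the paper's Step~1, which however invokes (D3) of Proposition~\ref{prop:WWY-longtime-JDE} directly: if $\sigma=\min_{\T^n\setminus O_\epsilon}(\varphi-c)>0$, then $\inf_{y\notin O_\epsilon}h_{y,\varphi(y)}(x,t)\geqslant T_t^-(c+\sigma)\to+\infty$, so the infimum localizes to $O_\epsilon$. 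Your characteristic-ODE sketch is less precise, since along a minimizer $p\not\equiv 0$ and the $u$-equation is $\dot u=L(\dot x,u)$ rather than $\dot u=-H(0,u)$; but the intended conclusion is the same.

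The genuine gap is the final step, where you pass from ``near-minimizers lie in $O_\epsilon$'' to $T_t^-\varphi(x)\geqslant F(x,t)-o(1)$ by invoking Lipschitz continuity of $(y,u)\mapsto h_{y,u}(x,t)$. Monotonicity in $u$ handles the $u$-variable (since $\varphi(y_t^*)\geqslant c$), but the Lipschitz constant of $y\mapsto h_{y,c}(x,t)$ furnished by Proposition~\ref{pr-af}(2) depends on the time window $[c,d]$ and is \emph{not} uniform as $t\to+\infty$; the $e^{\kappa t}$-expansiveness in Proposition~\ref{pr-sg}(3) already signals why. Hence from $\mathrm{dist}(y_t^*,I_\varphi)\to 0$ alone you cannot conclude $h_{y_t^*,c}(x,t)\geqslant F(x,t)-o(1)$. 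The paper closes this gap by exploiting the $x$-independence of $H$: one has the exact translation identity $h_{y_1,c}(x,t)=h_{y_2,c}(\Pi(x+y_2-y_1),t)$, whence
\[
\inf_{y\in\overline O_\epsilon}h_{y,c}(x,t)\ \geqslant\ \inf_{|z|\leqslant\epsilon}\,\inf_{y\in I_\varphi}h_{y,c}(\Pi(x+z),t)\ =\ \inf_{|z|\leqslant\epsilon}F(\Pi(x+z),t).
\]
Sending $t\to+\infty$ first and then $\epsilon\to 0^+$, continuity of $G=U_1$ delivers the lower bound. Without this translation trick (or some substitute giving a $t$-uniform modulus in the initial point), your $o(1)$ remains unjustified.
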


 	\begin{proof}
 	We divide the proof  into three steps.
 	
 	\medskip
 	
 \noindent \textbf{Step 1:}  We assert that for any   $\epsilon>0$, there exists $t_1>0$ such that 
	$$
	T^-_t \varphi(x)= \inf_{y\in O_\epsilon}h_{y,\varphi(y)}(x,t) \quad \forall (x,t) \in \mathbb{T}^n \times [t_1,+\infty),
	$$
	where $O_\epsilon$ is the $\epsilon$-neighborhood of $I_\varphi$.

 	In fact, let 
	$$
	\sigma:=\min_{x\in \mathbb{T}^n \backslash O_\epsilon}(\varphi(x)-c).
	$$
	 Then by the definition of $I_\varphi$, $\sigma>0$ is well defined. Let $u_\sigma:= c+\sigma$. Then $\varphi(x)\geqslant u_\sigma (x)$, $\forall x\in \T^n \backslash O_\epsilon$. By (D3) in Proposition \ref{prop:WWY-longtime-JDE}, we have that 
	\begin{align}\label{8-8}
		\lim_{t\to+\infty}T^-_tu_\sigma(x)=+\infty,\quad \text{uniformly on}\ x\in \T^n .
	\end{align}
	From (D1) in Proposition \ref{prop:WWY-longtime-JDE} there exist  $K>0$ independent of $\varphi$ and $T_\varphi>0$ such that 
	$$
	|T^-_t \varphi(x) | \leqslant K , \quad \forall (x,t)\in \T^n \times (T_\varphi,+\infty).
	$$ 
	From \eqref{8-8}, there is $t_1>T_\varphi$ such that
	\[
	T^-_tu_\sigma(x)\geqslant  K +1,\quad \forall (x,t)\in \T^n \times (t_1,+\infty),
	\]
	where $t_1$ depends on $\epsilon$ and $\varphi$.
	Thus, for any $t\geqslant t_1$ and any $x\in \T^n$, we get 
	\begin{align}\label{7-4}
		\inf_{y\in \T^n \backslash O_\epsilon}h_{y,\varphi (y)}(x,t)\geqslant\, \inf_{y\in \T^n \backslash O_\epsilon}h_{y,u_\sigma(y)}(x,t) 
		\geqslant \, \inf_{y\in  \T^n}h_{y,u_\sigma(y)}(x,t)=T^-_tu_\sigma(x)\geqslant K +1.
	\end{align}
	Hence, for any $t\geqslant t_1$, any $x\in \T^n$, by \eqref{7-4} we have that
	\begin{align*}
		T^-_t\varphi (x)=\,
		\inf_{y\in \T^n }h_{y,\varphi (y)}(x,t) 
		=\,\min\big\{\inf_{y\in O_\epsilon}h_{y,\varphi (y)}(x,t),\min_{y\in \T^n \backslash O_\epsilon}h_{y,\varphi (y)}(x,t)\big\}
		=\inf_{y\in O_\epsilon}h_{y,\varphi (y)}(x,t).
	\end{align*}
	So far, we have shown the assertion.

\medskip

\noindent \textbf{Step 2:} 
On one hand, since $I_\varphi \subset O_\epsilon$, we get that
	$$
	T^-_t \varphi(x)= \inf_{y\in O_\epsilon}h_{y,\varphi(y)}(x,t)\leqslant \inf_{y\in I_\varphi }h_{y,\varphi(y)}(x,t)=\inf_{y\in I_\varphi }h_{y,c}(x,t),\quad (x,t)\in \T^n \times[t_1,+\infty).
	$$
	Thus, for any $(x,t) \in \T^n \times [0,+\infty)$, 
	$$
	T^-_{t} \varphi(x) \leqslant \inf_{y \in I_\varphi }h_{y,c}(x, t), \quad \forall t\geqslant t_1.
	$$
By Lemma \ref{lem:lem-2} we have that
$$
	h_{y,c}(x,t+s)  = \inf_{z\in \mathbb{T}^n} h_{z,h_{y,c}(z,s)}(x,t ) \leqslant h_{\Phi_s^H (y ),h_{y,c}\left(\Phi_s^H (y ),s\right)}(x, t ) =h_{\Phi_s^H (y ),c }(x, t ).
$$
Since $I_\varphi $ is $\Phi_t^H$-invariant, then

	\begin{align*}
	  \inf_{y\in  I_\varphi}h_{y,c}(x,t+s) \leqslant  \inf_{y\in I_\varphi}  h_{\Phi_s^H (y ) ,c}(x,t)  = \inf_{y\in \Phi^H_{-s}I_\varphi}  h_{y,c}(x,t) 
			  =\inf_{y\in I_\varphi}  h_{y,c}(x,t).
	\end{align*}	
One can deduce that
	\begin{align}\label{7-20}
		\limsup_{ t \to +\infty }  T^-_{t}\varphi(x) \leqslant \lim_{t\to +\infty} \inf_{y\in I_\varphi}h_{y,c}(x,t)=:U_1(x). 
	\end{align}

On the other hand, for $ t>t_1 $ we get that
	$$
	T^-_{t} \varphi(x) \geqslant \inf_{y\in \overline{O}_\epsilon }h_{y,c}(x, t),\quad \forall (x,t) \in \mathbb{T}^n \times [0,+\infty). 
	$$
	And thus,  
	\begin{align}\label{7-21}
		\liminf_{t\to +\infty}  T^-_{ t}\varphi(x) \geqslant \lim_{t\to +\infty} \inf_{y\in \overline{O}_\epsilon }h_{y,c}(x, t)=:U^\epsilon_2(x). 
	\end{align}
	By \eqref{7-20} and \eqref{7-21}, we have $U_1\geqslant U^\epsilon_2$.

\medskip

 \noindent \textbf{Step 3:} We prove that
  \begin{align}\label{7-26}
		\lim_{  \epsilon \to 0^+ } |U_1(x) - U^\epsilon_2(x)|=0, \quad\forall x\in \T^n. 
	\end{align}
Notice that for any $x_1 \in I_\varphi$ and $x_2\in  \overline{O}_\epsilon$ with $|x_1-x_2|\leqslant \epsilon$, by (H1)-(H3),
 $$
 h_{x_1,c}(x,t)= h_{x_2,c}(\Pi(x+x_2-x_1) ,t), \quad \forall (x,t)\in \T^n\times (0,+\infty).
 $$
 Then we have
\begin{align*}
	U^\epsilon_2(x):=&\,\lim_{t\to +\infty} \inf_{y\in \overline{O}_\epsilon }h_{y,c}(x, t) \geqslant  \inf_{|z|\leqslant \epsilon} \, \lim_{t\to +\infty} \inf_{y\in I_\varphi  }h_{y,c}(\Pi(x+z), t) = \inf_{|z|\leqslant \epsilon}   U_1(\Pi(x+z)).
\end{align*}
By the continuity of  $U_1(x)$ on $\T^n$, one gets that 	$\displaystyle \lim_{  \epsilon \to 0^+ } |U_1(x) - U^\epsilon_2(x)|=0$ for any $x\in \T^n$.
	Then we have 
	$$
	\liminf_{t\to +\infty}T^-_{t}\varphi(x)= \limsup_{t\to +\infty}T^-_{t}\varphi(x)
	$$
	 and thus $\lim_{t\to +\infty } T_t^- \varphi$ is a  viscosity solution of  $H(\partial_x u(x),u(x))=0$
	by Proposition \ref{prop:liminf}. 
\end{proof}

\section{Proof of Theorem \ref{thm:2}} 
 Assume $T\in \mathcal{D}$, we prove that equation \eqref{eq:HJe} admits   a non-trivial time-periodic solution with the fundamental period $T$ in this part.

Since $T\in \mathcal{D}$, in view of the proof of Theorem \ref{thm:1},  we only need to show that the $T$-periodic solution $u(x,t)$ defined in \eqref{eq:eq:pf-thm1-step3-def-u} satisfies
$$
u(x,t) \not \equiv u(x,0), \quad \forall\, t\in (0,T).
$$ 
First, we claim that
\begin{equation}\label{eq:pf-thm2-cliam-1}
\Phi_t^H (S)=\{x\in \T^n: u(x,t)=c \}, \quad \forall\, t\in \R.
\end{equation}
On one side, by Lemma \ref{lem:subsolution-w},  for $\epsilon\in (0,\epsilon_0] $, 
$$
u(x,t)=\lim_{k\to +\infty} U_k(x,t)=\lim_{k\to +\infty} \inf_{y \in S} h_{y,c}(x, kT+t) \geqslant \lim_{k\to +\infty} \inf_{y \in S} W^\epsilon_{y}(x,kT+ t).
$$
Since  $\displaystyle \inf_{y\in S} W^\epsilon_{y}(x,\cdot )$ is $T$-periodic in \eqref{eq:pf-thm1-step4-1}, then we have
\begin{align*}
		\lim_{k\to +\infty} \inf_{y \in S} W^\epsilon_{y}(x,kT+ t) =  &\, \inf_{y \in S} W^\epsilon_{y}(x,  t)  \\
	 =  &\,	 c+\displaystyle \inf_{y\in S} \epsilon \sum_{i=1}^n \Big( 1+\sin \Big( 2\pi x_i- 2\pi y_i-\frac{\pi}{2} -2\pi  \omega_i t \Big)  \Big) \\
		=&\, c+\displaystyle \inf_{y\in S} \epsilon \sum_{i=1}^n \Big( 1+\sin  \Big( 2\pi x_i -2\pi (y_i+\omega_i t) -\frac{\pi}{2}  \Big) \Big)\\
		=&\, c+\displaystyle \inf_{y\in S} \epsilon \sum_{i=1}^n \Big( 1+\sin  \Big( 2\pi x_i -2\pi\cdot \Pi(y_i+\omega_i t)-\frac{\pi}{2} \Big) \Big) \\
		=&\, c+\displaystyle \inf_{z  \in \Phi_{t}^H (S)} \epsilon \sum_{i=1}^n \Big( 1+\sin  \Big( 2\pi x_i -2\pi z_i -\frac{\pi}{2}  \Big) \Big) \\
		=&\, 	\displaystyle \inf_{y\in \Phi_{t}^H (S)} W^\epsilon_{y}(x, 0). 
\end{align*}
This implies that
\begin{equation}\label{eq:pf-thm2-cliam-2}
\{x\in \T^n: u(x,t)=c \} \subset  \{x\in \T^n :\displaystyle \inf_{y\in \Phi_{t}^H (S)} W^\epsilon_{y}(x, 0)=c \}= \Phi_{t}^H (S).
\end{equation}
 On the other side, by \eqref{eq:pf-step2-0} and \eqref{eq:pf-thm1-step2-2},  for any $x\in S$,
 $$
 c\leqslant u(x,0)= \lim_{k\to +\infty} \inf_{y \in S} h_{y,c}(x, kT) \leqslant \lim_{k\to +\infty} h_{\Phi_{-kT}^H(x) ,c}(x, kT) =c.
 $$
Thus,  for any $x_1\in \Phi_{t}^H (S)$, set $x_0:= \Phi_{-t}^H (x_1) \in S$,  by  Lemma \ref{lem:lem-2}, 
 $$
 u(x_1,t)=T_t^- u(x_1,0) = T_t^- u(\Phi_{t}^H(x_0) ,0)= c, \quad \forall\,  t\geqslant 0.
 $$
It follows that $  \Phi_{t}^H (S) \subset \{x\in \T^n: u(x,t)=c \}$. In view of \eqref{eq:pf-thm2-cliam-2}, we have proved  \eqref{eq:pf-thm2-cliam-1}.

Due to \eqref{eq:eq:pf-thm1-step1-1}, we get that the two nonempty sets $\{x\in \T^n: u(x,t)=c \}$ and $\{x\in \T^n: u(x,0)=c \}$  satisfy
$$
\{x\in \T^n: u(x,t)=c \} \bigcap  \{x\in \T^n: u(x,0)=c \}= \emptyset.
$$
So,
$$
u(x,t) \not \equiv u(x,0), \quad \forall\, t\in (0,T).
$$ 
Hence, $T$ is the fundamental period of the solution $u(x,t)$.

\section{Proof of Corollary \ref{Cor:1}}
In this section, we focus on the construction of almost time-periodic viscosity solutions. We collect some properties of almost periodic functions here.
\begin{itemize}
	\item If $f$ is almost periodic, then $|f|$  is also almost periodic;
	\item If $f$ and $g$ are almost periodic, then $f+g$, $f-g$  are also almost periodic.
\end{itemize}

Hence, the proof of Corollary \ref{Cor:1} is a direct consequence of the following Lemma:
\begin{lemma}\label{lem:almost-periodic}
	For any fixed $\mathcal{T}_1, \mathcal{T}_2\in D$ with $\frac{\mathcal{T}_1}{\mathcal{T}_2}\notin \mathbb{Q}$, there exist two non-trivial time-periodic viscosity solutions $w_1(x,t)$, $w_2(x,t)$  of equation \eqref{eq:HJe} with period  $\mathcal{T}_1$ and $\mathcal{T}_2$, respectively.  Furthermore,
	$$
			w(x,t):=\min\{ w_1(x,t), w_2(x,t) \}
			$$
			is a non-trivial almost periodic viscosity  solution of equation \eqref{eq:HJe}.
\end{lemma}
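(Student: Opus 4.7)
The plan is to decompose the lemma into four tasks: producing the periodic ingredients, proving almost periodicity of the combined function, proving that the minimum is indeed a viscosity solution, and confirming non-triviality. Tasks 1, 2, 4 are routine; task 3 is the real obstacle.

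\textbf{Step 1 (Ingredients).} Since $\mathcal{T}_1,\mathcal{T}_2\in D$, Theorem \ref{thm:1} directly supplies non-trivial $\mathcal{T}_i$-periodic viscosity solutions $w_i$ of \eqref{eq:HJe}. By the representation in the proof of Theorem \ref{thm:1}, each $w_i$ is bounded (Proposition \ref{prop:WWY-longtime-JDE}(D1)) and Lipschitz in $x$ uniformly in $t$ (Proposition \ref{pr-sg}(2)).

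\textbf{Step 2 (Almost periodicity).} For every fixed $x\in\T^n$, the function $t\mapsto w_i(x,t)$ is continuous and $\mathcal{T}_i$-periodic, hence Bohr almost periodic. Using that the Bohr almost periodic functions are closed under $+$, $-$ and $|\,\cdot\,|$, together with the identity
\[
\min\{a,b\}=\tfrac{1}{2}\bigl((a+b)-|a-b|\bigr),
\]
the function $t\mapsto w(x,t)$ is almost periodic for each $x$. The equi-Lipschitz estimate in $x$ noted in Step 1 upgrades this pointwise almost periodicity to uniform almost periodicity on $\T^n$: a common $\epsilon$-period on a dense grid extends, via the Lipschitz bound, to an $\epsilon'$-period on all of $\T^n$.

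\textbf{Step 3 (Viscosity solution property).} This is the main obstacle, because in general the pointwise minimum of two viscosity solutions of a first-order Hamilton--Jacobi equation is only a supersolution. The supersolution half is standard: if a test function $\phi$ touches $w$ from below at $(x_0,t_0)$, then WLOG $w_1(x_0,t_0)\leq w_2(x_0,t_0)$, hence $\phi$ touches $w_1$ from below at $(x_0,t_0)$ and the supersolution inequality of $w_1$ gives $\partial_t\phi+H(\partial_x\phi,w)\geq 0$ at that point. For the subsolution half, if $\phi$ touches $w$ from above at $(x_0,t_0)$ and $w_1(x_0,t_0)<w_2(x_0,t_0)$ strictly, continuity forces $w=w_1$ in a neighborhood and the subsolution inequality transfers from $w_1$. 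The delicate case is $w_1(x_0,t_0)=w_2(x_0,t_0)$: here I would exploit the strict monotonicity (H3) together with a doubling-of-variables/perturbation argument — approximating $\phi$ by $\phi+\eta_i(x,t)$ with small strict subsolution increments of $H$ built from the functions $W^{\epsilon}_{x_0}$ of Lemma \ref{lem:subsolution-w}, separating the level sets of $w_1-w_2$, and passing to the limit $\eta_i\to 0$ using continuity of $H$ and of the derivatives of $\phi$. The strict decrease of $H$ in $u$ is what makes this limiting argument stable.

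\textbf{Step 4 (Non-triviality).} Suppose for contradiction that $w$ were trivial; then $w\equiv c$ on $\T^n\times[0,+\infty)$ by Remark \ref{1}. Since $w\leq w_i$ pointwise and $w_i\geq c$ everywhere (both $w_i$ satisfy Lemma \ref{lem:subsolution-w}-type lower bounds as seen in the proof of Theorem \ref{thm:1}), this would force $w_1\equiv w_2\equiv c$, contradicting the non-triviality of $w_1$ (or $w_2$). Alternatively, if $w$ were periodic with some period $T$, then the set of periods of $w_1$ and $w_2$ would share the common period $T$, forcing $T/\mathcal{T}_i\in\mathbb{Q}$ for $i=1,2$ and hence $\mathcal{T}_1/\mathcal{T}_2\in\mathbb{Q}$, contradicting the hypothesis. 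So $w$ is genuinely almost periodic and non-trivial.
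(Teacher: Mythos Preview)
Your Step 3 misidentifies the right tool. The test-function route really is problematic: for general first-order Hamilton--Jacobi equations the pointwise minimum of two viscosity solutions is only a supersolution, and the vague perturbation you sketch (``doubling-of-variables \dots\ using $W^{\epsilon}_{x_0}$ \dots\ passing to the limit'') does not close that gap --- nothing in (H3) makes the subsolution inequality survive at contact points $w_1=w_2$ under the test-function definition. The paper sidesteps this entirely via the semigroup representation. Since each $w_i$ is a viscosity solution, $w_i(\cdot,t)=T_t^{-}w_i(\cdot,0)$; the key observation (stated as Lemma \ref{lem:Tt-+inf}) is that $T_t^{-}$ commutes with pointwise infima, $\inf_i T_t^{-}\varphi_i = T_t^{-}\bigl(\inf_i\varphi_i\bigr)$, and this follows in one line from $T_t^{-}\varphi(x)=\inf_y h_{y,\varphi(y)}(x,t)$ together with the monotonicity of $u\mapsto h_{y,u}(x,t)$ (Proposition \ref{pr-af}(1)). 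Hence $w(\cdot,t)=\min_i T_t^{-}w_i(\cdot,0)=T_t^{-}w(\cdot,0)$, and $w$ is a viscosity solution. That is where convexity and the Lax--Oleinik structure are actually used, not a doubling argument.

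Your Step 4 also has gaps. The first alternative is simply wrong: $w\equiv c$ only says that at each point \emph{at least one} of $w_1,w_2$ equals $c$, which does not force either $w_i\equiv c$. The second alternative assumes without justification that $T$-periodicity of $\min\{w_1,w_2\}$ implies $T$-periodicity of each $w_i$; this is false in general. The paper argues instead with the \emph{specific} solutions built in Theorem \ref{thm:1}: for those one knows (via Lemmas \ref{lem:subsolution-w} and \ref{lem:lem-2}) that $\{x:w(x,0)=c\}=S_{\mathcal T_1}\cup S_{\mathcal T_2}$, and $T$-periodicity of $w$ would force $\Phi_T^{H}\bigl(S_{\mathcal T_1}\cup S_{\mathcal T_2}\bigr)=S_{\mathcal T_1}\cup S_{\mathcal T_2}$. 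Because each $S_{\mathcal T_i}$ is a projected union of parallel hyperplanes, this invariance yields $T/\mathcal T_i\in\mathbb Q$ for $i=1,2$, hence $\mathcal T_1/\mathcal T_2\in\mathbb Q$, the desired contradiction.
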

\medskip

\begin{proof}
In order to give the proof clearly, we divide it into three parts as follows.

\noindent {\bf Step1:} We show that $w(x,t)$ is a viscosity  solution of \eqref{eq:HJe}. It is sufficient to show the following lemma.

\begin{lemma}\label{lem:Tt-+inf}
The following two properties of viscosity solutions hold.\begin{itemize} 
	\item [(1)] Let $\{\varphi_i(x)\}_{i\in I}$ be a family of continuous functions on $\T^n$. Then for each $t>0$, 
	$$
	\inf_{i\in I} T_t^{-} \varphi_i (x)=T_t^- \big( \inf_{i\in I} \varphi_i(x) \big), \quad \forall x\in \T^n.
	$$
	\item [(2)] Let $\{u_i(x,t)\}_{i\in I}$ be a family of viscosity solutions to equation \eqref{eq:HJe} on $\T^n\times \R$. Then 
	$$
	u(x,t):=\inf_{i\in I} \,u_i(x,t) ,\quad (x,t)\in \T^n\times \R
	$$
	is a viscosity solution of \eqref{eq:HJe}.
\end{itemize}
	\end{lemma}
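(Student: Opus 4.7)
The plan is to reduce both parts to known properties of the semigroup $\{T_t^-\}$ and the forward implicit action function $h_{y,u_0}(x,t)$. For (1), the key tool is the representation formula $T_t^-\varphi(x)=\inf_{y\in\T^n} h_{y,\varphi(y)}(x,t)$ from Proposition~\ref{pr-sg}(5), combined with the monotonicity and Lipschitz continuity of $u_0\mapsto h_{y,u_0}(x,t)$ from Proposition~\ref{pr-af}(1)--(2). Part (2) will then follow from (1) using the semigroup identity in Proposition~\ref{pr-sg}(6).

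For (1), one writes
\[
\inf_{i\in I} T_t^-\varphi_i(x) = \inf_{i\in I}\inf_{y\in\T^n} h_{y,\varphi_i(y)}(x,t) = \inf_{y\in\T^n}\inf_{i\in I} h_{y,\varphi_i(y)}(x,t).
\]
Fix $y\in\T^n$ and set $\bar\varphi(y):=\inf_{i\in I}\varphi_i(y)$. Choose a minimizing sequence $\{i_n\}$ with $\varphi_{i_n}(y)\to\bar\varphi(y)$. By monotonicity of $h_{y,\cdot}(x,t)$ in $u_0$, $h_{y,\varphi_i(y)}(x,t)\geq h_{y,\bar\varphi(y)}(x,t)$ for every $i$, hence $\inf_i h_{y,\varphi_i(y)}(x,t)\geq h_{y,\bar\varphi(y)}(x,t)$. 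By Lipschitz continuity of $h_{y,\cdot}(x,t)$, $h_{y,\varphi_{i_n}(y)}(x,t)\to h_{y,\bar\varphi(y)}(x,t)$, yielding the reverse inequality. Therefore $\inf_i h_{y,\varphi_i(y)}(x,t)=h_{y,\bar\varphi(y)}(x,t)$, and applying the representation formula once more gives
\[
\inf_{i\in I} T_t^-\varphi_i(x)=\inf_{y\in\T^n} h_{y,\bar\varphi(y)}(x,t) = T_t^-\bar\varphi(x).
\]

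For (2), each $u_i$ is a viscosity solution of~\eqref{eq:HJe}, so by Proposition~\ref{pr-sg}(6), $u_i(\cdot,t_0+s)=T_s^-(u_i(\cdot,t_0))$ for every $t_0\in\R$ and $s\geq 0$. Taking the infimum over $i\in I$ and applying part~(1) with $\varphi_i=u_i(\cdot,t_0)$,
\[
u(x,t_0+s)=\inf_{i\in I} T_s^-(u_i(\cdot,t_0))(x)=T_s^-\bigl(\inf_{i\in I}u_i(\cdot,t_0)\bigr)(x)=T_s^-(u(\cdot,t_0))(x).
\]
Since $s\mapsto T_s^-(u(\cdot,t_0))$ is the viscosity solution of~\eqref{eq:HJe} with initial datum $u(\cdot,t_0)$, this identifies $u$ as a viscosity solution on $\T^n\times[t_0,+\infty)$; as $t_0\in\R$ is arbitrary, $u$ is a viscosity solution on $\T^n\times\R$.

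The one technical nuisance is regularity of the infimum: the semigroup $T_s^-$ is only defined on $C(\T^n,\R)$, whereas an arbitrary pointwise infimum of continuous functions is only upper semicontinuous. This requires one to tacitly assume $\bar\varphi\in C(\T^n,\R)$ in (1), and $x\mapsto u(x,t)\in C(\T^n,\R)$ for each $t$ in (2). In every application of the lemma in the paper these hypotheses hold automatically: either $I$ is finite (as in Lemma~\ref{lem:almost-periodic}, where $w=\min\{w_1,w_2\}$), or the family is equi-Lipschitz on a compact set via Proposition~\ref{pr-af}(2), which forces continuity of the infimum. Thus the proof is essentially formal; pinpointing the regularity assumption is the only real care needed.
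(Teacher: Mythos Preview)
Your proof is correct and follows essentially the same route as the paper: representation formula plus monotonicity of $u_0\mapsto h_{y,u_0}(x,t)$ for part~(1), and the semigroup identity for part~(2). If anything, you are more careful than the paper, which invokes only monotonicity for the identity $\inf_i h_{y,\varphi_i(y)}(x,t)=h_{y,\inf_i\varphi_i(y)}(x,t)$ (glossing over the reverse inequality that you justify via Lipschitz continuity) and does not flag the implicit continuity assumption on $\inf_i\varphi_i$.
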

	\begin{proof}
	To prove (1), let us recall 
	$$
	T_t^{-} \big( \inf_{i\in I} \varphi_i(x) \big)= \inf_{y\in \T^n} h_{y, \inf_{i\in I}\varphi _i(y)} (x,t).
	$$
	Due to the monotonicity of $h_{y,u}(x,t)$ with respect to $u$, we get that
	$$
	\inf_{y\in \T^n} h_{y, \inf_{i\in I}\varphi _i(y) }(x,t)= \inf_{y\in \T^n}\inf_{i\in I} h_{y, \varphi _i(y) }(x,t).
	$$
	Since $y$  is independent of $i$ , then
	$$
	 \inf_{y\in \T^n}\inf_{i\in I} h_{y, \varphi _i(y) }(x,t)= \inf_{i\in I}\inf_{y\in \T^n} h_{y, \varphi _i(y) }(x,t)=\inf_{i\in I} T_t^{-} \varphi_i(x),
	$$
	which implies $ T_t^{-} \big( \inf_{i\in I} \varphi_i(x) \big)= \inf_{i\in I} T_t^{-} \varphi_i(x)$.

		To prove (2), for any $t_1 < t_2 $, we have
	\begin{align*}
		u(x,t_2)= \inf_{i\in I} \,u_i(x,t_2)=\inf_{i\in I}\, T_{t_2-t_1}^- u_i(x,t_1) 
		= T_{t_2-t_1}^-\inf_{i\in I}\, u_i(x,t_1) = T_{t_2-t_1}^-u(x,t_1),
	\end{align*}
	which completes the proof.
	\end{proof}

\medskip

\noindent	{\bf Step2:} We show that $w(x,t)$ is an almost periodic solution of \eqref{eq:HJe}.
Assume that $w_1(x,t), w_2(x,t) $ are two time-periodic viscosity solutions  of \eqref{eq:HJe}. By the properties of almost periodic functions, it is clear  that
$$
	\min\{ w_1(x,t), w_2(x,t) \}=\frac{1}{2}\Big(  w_1(x,t)+ w_2(x,t)-|w_1(x,t)- w_2(x,t)|\Big). 
	$$	
	is an almost periodic viscosity solution of \eqref{eq:HJe}.

\medskip

\noindent{\bf Step3:} We show that $w(x,t)$ is not a periodic solution of \eqref{eq:HJe}. 
For $\mathcal{T}_i \in D,i=1,2$, there exist $( r_1,\cdots, r_{n+1}),(s_1,\cdots, s_{n+1}) \in \mathbb{Z}^n \backslash\{0\} $ such that 
$$
(r_1\omega_1+ \cdots +r_n \omega_n ) \cdot r^{-1}_{n+1} \mathcal{T}_1=1 , \quad (s_1\omega_1+ \cdots +s_n \omega_n ) \cdot s^{-1}_{n+1} \mathcal{T}_2=1 
$$
	Set 
	\begin{align*}
		S_{\mathcal{T}_1}: =\Pi \Big( \{ \, x\in \R^n :  r_1x_1+ \cdots +r_n x_n \in \mathbb{Z} \}\Big), \\
			S_{\mathcal{T}_2}: =\Pi \Big( \{ \, x\in \R^n :  s_1x_1+ \cdots +s_n x_n \in \mathbb{Z} \}\Big),
	\end{align*}
	 and
	$$
	w_i(x,t):= \lim_{k\to +\infty }\inf_{y\in S_{\mathcal{T}_i}} h_{y,c}(x,kT+t), \quad i=1,2, \quad (x,t)\in \mathbb{T}^n\times \R^+. 
	$$
Then
		$$
			w(x,t):=\min\{ w_1(x,t), w_2(x,t) \}=  \lim_{k\to +\infty }\min_{y\in S_{\mathcal{T}_1}\cup S_{\mathcal{T}_2}  } h_{y,c}(x,kT+t).
			$$
By Lemma \ref{lem:lem-2} and Lemma \ref{lem:subsolution-w}, one gets
 $$
 \{x\in \mathbb{T}^n: w(x, 0 )=c \}= S_{\mathcal{T}_1}\cup S_{\mathcal{T}_2}.
 $$

	Assume by contradiction that $w(x,t)$ is a  time-periodic viscosity solution of equation  \eqref{eq:HJe} with some periodic $  \mathcal{T}$. Then  by Lemma \ref{lem:lem-2}
	
$$
 S_{\mathcal{T}_1}\cup S_{\mathcal{T}_2}=\Phi^H_\mathcal{T}  \Big( S_{\mathcal{T}_1}\cup S_{\mathcal{T}_2} \Big) =\Phi^H_\mathcal{T}( S_{\mathcal{T}_1})\cup \Phi^H_\mathcal{T}(S_{\mathcal{T}_2}). 
$$
Note that $\Phi^H_\mathcal{T} (S_{\mathcal{T}_1})\subset  S_{\mathcal{T}_1}\cup S_{\mathcal{T}_2} $   implies $ \frac{ \mathcal{T}}{ \mathcal{T}_1} \in \mathbb{Q} $ and $\Phi^H_\mathcal{T} (S_{\mathcal{T}_2})\subset  S_{\mathcal{T}_1}\cup S_{\mathcal{T}_2} $ implies $ \frac{ \mathcal{T}}{ \mathcal{T}_2} \in \mathbb{Q} $. It follows
	$$
	\frac{\mathcal{T}_1}{\mathcal{T}_2}\in \mathbb{Q},
 $$
 a contradiction.
\end{proof}

\begin{proof}[Proof of Corollary \ref{Cor:1}]
We divide the proof into two steps.
 
 \noindent {\bf Step 1:} We show that if there exist $i$, $j \in \{1,2,\cdots,n \}$  such that $\omega_i/\omega_j \notin \Q $, then equation \eqref{eq:HJe} admits infinitely many non-trivial Bohr almost periodic viscosity solutions.
 
 Take
 $$
 \mathcal{T}_k:= w_i +k w_j, \quad k\in \mathbb{N} .
 $$ 
 It is clear that $\mathcal{T}_k \in \mathcal{D} \subset D$ and $\mathcal{T}_{k_1}/ \mathcal{T}_{k_2} \notin \Q$ for each $k_1,k_2\in \mathbb{N} $. In view of  Lemma \ref{lem:almost-periodic}, we deduce  that  there exist  non-trivial time-periodic viscosity solutions $w_k(x,t)$  of equation \eqref{eq:HJe} with period  $\mathcal{T}_k$ for each $k\in \mathbb{N} $.  Furthermore,
	$$
			\min\{ w_1(x,t), w_k(x,t) \}, \quad  k \in \mathbb{N}
			$$
			are different non-trivial almost periodic viscosity  solutions of equation \eqref{eq:HJe}. Hence, equation \eqref{eq:HJe} admits infinitely many non-trivial almost periodic viscosity solutions.

\medskip			
 \noindent {\bf Step 2:} We show that if  equation \eqref{eq:HJe} admits infinitely many non-trivial  almost periodic viscosity solutions, then 		there exist $i$, $j \in \{1,2,\cdots,n \}$  such that $\omega_i/\omega_j \notin \Q $.
 
Assume by contradiction that for any $i,j \in \{1,2,\cdots,n \},w_j=0 $ or  $\omega_i/\omega_j \in \Q$. Assume that $\omega_1\neq 0$. There exists $k_0 \in \mathbb{N}$ such that 
 $$
k_0 \frac{ \omega_k}{\omega_1}   \in \mathbb{Z}, \quad \forall \, k \in \{1,2,\cdots,n \}.
 $$
Set $\mathcal{T}:=\frac{k_0}{\omega_1}$.  This implies that $ \Phi_{ \mathcal{T}}^H (x_0)=x_0$ for any $x_0\in \T^n$. Thus, for any  $\varphi \in C(\T^n,\R)$ satisfying   $\displaystyle \min_{x\in \T^n } \varphi = c$, we get that
 	$$
 \Phi_{\mathcal{T} } ^H (I_\varphi)  = I_\varphi, 
 	$$ 
 	where $	I_\varphi:=  \{ x\in \T^n:  \varphi(x)=c \}$.

 	One can deduce the following results (i)-(iv). The proofs of (i)-(iii) are similar to the ones in the proof of Lemma \ref{lem:long time}.  The item (iv) can be proved by step 3 in the proof of Theorem \ref{thm:1}. 	
 	
 	\begin{itemize}
 		\item[(i)] for any   $\epsilon>0$, there exists $t_1>0$ such that 
	$$
	T^-_t \varphi(x)= \inf_{y\in O_\epsilon}h_{y,\varphi(y)}(x,t) \quad \forall (x,t) \in \mathbb{T}^n \times [t_1,+\infty).
	$$
     \item [(ii)] For any $(x,t) \in \T^n \times [t_1,+\infty)$, 
	$$
\lim_{n\to +\infty}  \inf_{y\in \overline{O}_\epsilon }h_{y,c}(x,n\mathcal{T}+ t)=:U^\epsilon_2(x,t) \leqslant T^-_{t}  \varphi(x) \leqslant U_1(x,t):=\lim_{n\to +\infty}  \inf_{y \in I_\varphi }h_{y,c}(x,n\mathcal{T}+ t).
	$$
\item[(iii)]For any $(x,t) \in \T^n \times [0,+\infty)$, $\displaystyle \lim_{  \epsilon \to 0^+ } |U_1(x,t) - U^\epsilon_2(x,t)|=0 $. \\ 
\item[(iv)] For any $(x,t) \in \T^n \times [0,+\infty)$,
  $$
  \lim_{n\to +\infty} T_{n\mathcal{T}+ t}^- \varphi(x)=U_1(x,t)  
  $$
   and $U_1(x,t)$ is a $\mathcal{T}$-periodic solution of equation \eqref{eq:HJe}.
 	\end{itemize}
\medskip 	
 	Hence, we conclude that  for any  $\varphi \in C(\T^n,\R)$ satisfying   $\displaystyle \min_{x\in \T^n } \varphi = c$, the limit
 	$$
 	  \lim_{n\to +\infty} T_{n\mathcal{T}+ t}^- \varphi(x):= w_\infty^\varphi(x,t)   \quad \forall (x,t) \in \T^n \times [0,+\infty)
 	$$
 	exists and $w_\infty^\varphi(x,t)$ is a time periodic viscosity solution of  equation  \eqref{eq:HJe} with periodic $\mathcal{T}$.
 	This contradicts that there exists a non-trivial  almost periodic viscosity solution of  equation   \eqref{eq:HJe}.
  \end{proof}

\section{An example}
In this section we provide a simple example to illustrate our results.
Consider the contact Hamiltonian
$$
H(x,p,u)=\langle p,p \rangle +\langle \omega,p\rangle -\lambda u, \quad \lambda>0, \ \omega\neq 0, \  \ x\in \mathbb{T}^n.
$$
Let $x=(x_1,x_2,\cdots,x_n)$, $\omega= (\omega_1,\omega_2,\cdots \omega_n)$ and $x_0=(x_1^0,x_2^0,\cdots x_n^0)$.
It is clear that for any $x_0\in \T^n$,  
$$
w_{x_0}(x,t):=\min_{k\in\mathbb{Z}}\frac{\lambda}{4}( x_1- x_1^0+k-\omega_1 t)^2 + \cdots +\min_{k\in\mathbb{Z}}\frac{\lambda}{4}( x_n- x_n^0+k-\omega_n t)^2
$$
 is a viscosity solution of \eqref{eq:HJe} satisfying $w_{x_0}(x_0,0)=0$. 
 For any $T\in D$, there exists $( k_1,\cdots, k_{n+1})\in \mathbb{Z}^n$ such that $(k_1\omega_1+ \cdots +k_n \omega_n )T=k_{n+1}\neq 0$.
 Set  
 \begin{align*}
 	u(x,t)=&\, \min_{x_0\in S_T} w_{x_0}(x,t) \\
 	=&\, \min_{x_0\in S_T}\Big( \min_{k\in\mathbb{Z}}\frac{\lambda}{4}( x_1- x_1^0+k-\omega_1 t)^2 + \cdots +\min_{k\in\mathbb{Z}}\frac{\lambda}{4}( x_n- x_n^0+k-\omega_n t)^2 \Big),
 \end{align*}
 where $S_T:=\Pi( l)=\Pi\big(\{ \,x \in \R^n :  k_1x_1+ \cdots +k_n x_n \in \mathbb{Z} \}\big)  $.  Then for any $(x,t)\in \T^n\times [0,+\infty)$,
 \begin{align*}
 	T_T^-u(x,t)=&\,  T_T^-\min_{x_0\in S_T} w_{x_0}(x,t)\\ =&\,\min_{x_0\in S_T}  T_T^-w_{x_0}(x,t) \\
 	=&\,\min_{x_0\in S_T} w_{x_0}(x,t+T) \\
 	=&\, \min_{x_0 \in S_T} w_{x_0+\omega T}(x,t)\\=&\, \min_{x_0 \in \Phi^H_T(S_T)} w_{x_0}(x,t)\\=&\,u(x,t).
 \end{align*}
 Thus $u(x,t)$ is a non-trivial $T$-periodic solution of equation \eqref{eq:HJe}.

Moreover, 	there exist $i$, $j \in \{1,2,\cdots,n \}$  such that $\omega_i/\omega_j \notin \Q $ , then there exist   $\mathcal{T}_1,\mathcal{T}_2\in D$ satisfying $\mathcal{T}_1/\mathcal{T}_2\notin \Q $, by Lemma \ref{lem:almost-periodic},   
\begin{align*}
\widetilde u(x,t):=&\,\min \Big\{  \min_{x_0\in S_{\mathcal{T}_1}} w_{x_0}(x,t),\min_{x_0\in S_{\mathcal{T}_2}} w_{x_0}(x,t)  \Big\} \\
=&\,\min_{x_0\in S_{\mathcal{T}_1}\cup S_{\mathcal{T}_2} } w_{x_0}(x,t)
\end{align*}
is a non-trivial almost periodic viscosity solution of equation \eqref{eq:HJe}.

  \bigskip

\noindent   {\bf There is no conflict of interest and no data in this paper.}
  
  	\medskip

  \section*{Acknowledgements}
  Kaizhi Wang is supported by National Key R\&D Program of China (2022YFA1005900), NSFC Grant Nos. 12171315, 11931016 and Natural Science Foundation of Shanghai No. 2ZR1433100. Jun Yan is supported by NSFC Grant Nos. 12171096,  12231010. Kai Zhao is supported by NSFC Grant No. 12301233.

\end{document}